      \def\@setcopyright{}
      \def\serieslogo@{}
\newcommand{\boldsym}[1]{\boldsymbol{#1}}
\newcommand\bn{\boldsym{n}}
\newcommand{\N}{\mathbb N}
\newcommand{\dbar}{\partial}
\newcommand{\ddbar}{\overline\partial}
\newcommand{\ol}{\overline}
\DeclareMathOperator{\Ker}{Ker}
\newcommand{\cali}[1]{\mathscr{#1}}
\newcommand{\cH}{\cali{H}}
\newcommand{\cE}{\cali{E}}
\newcommand{\mT}{\mathcal{T}}
\newcommand{\R}{\mathbb{R}}
\newcommand{\ov}{\overline}
\newcommand{\til}[1]{\widetilde{#1}}
\newcommand{\wi}{\widetilde}
\def\Im{{\rm Im}}
\DeclareMathOperator{\supp}{supp}
\DeclareMathOperator{\Dom}{Dom}
\DeclareMathOperator{\rank}{rank}
 \def\cC{\mathscr{C}}
\def\cL{\mathscr{L}}
\theoremstyle{plain}
\newtheorem{theorem}{Theorem}[section]
\newtheorem{lemma}[theorem]{Lemma}
\newtheorem{corollary}[theorem]{Corollary}
\newtheorem{proposition}[theorem]{Proposition}
\numberwithin{equation}{section}
\begin{document}

\title[On Bergman kernel functions and weak Morse inequalities]
{On Bergman kernel functions and weak holomorphic Morse inequalities
 \let\thefootnote\relax\footnotetext{Xiaoshan Li was supported by NSFC Grant No.11871380. The second author was supported by NSFC Grant No. 12001549 and Guangdong Basic and Applied Basic Research Foundation Grant No. 2019A1515110250.}}

\author[]{Xiaoshan Li}
\address{School of Mathematics
and Statistics, Wuhan University, Wuhan 430072, Hubei, China}
\email{xiaoshanli@whu.edu.cn}

\author[]{Guokuan Shao}
\address{School of Mathematics (Zhuhai), Sun Yat-sen University, Zhuhai 519082, Guangdong, China}
\email{shaogk@mail.sysu.edu.cn}

\author[]{Huan Wang}
\address{Tong Jing Nan Lu 798, Suzhou, P. R. China.}
\email{huanwang2016@hotmail.com}
\date{}

\begin{abstract}
We give simple and unified proofs of weak holomorhpic Morse inequalities on complete manifolds, $q$-convex manifolds, pseudoconvex domains, weakly $1$-complete manifolds and covering manifolds. This paper is essentially based on the asymptotic Bergman kernel functions and the Bochner-Kodaira-Nakano formulas.
\end{abstract}
\maketitle
\section{Introduction}

The history of holomorphic Morse inequalities initiated from the seminal work \cite{Dem:85} influenced by Siu's solution of Grauert-Riemenschneider conjecture and the classical Morse inequalities. It provides a flexible way to produce holomorphic sections of high tensor powers of line bundle under rather mild positivity assumption. Adjacent to the contribution of Shiffman-Ji, Takayama and Bonavero, one of the fundamental application is the full characterization of Moishezon manifolds and big line bundles.  Since then the holomorphic Morse inequalities attracted intensively study during the past two decades, see \cite{MM07} and reference therein for a comprehensive exposition.  Recently the Morse inequalities have been proved in new context, such as \cite{HM12,HsiaoLi:16,HHLS20} for CR manifolds and \cite{Pul:17,Pul:18} for the $G$-invariant case in the spirit of geometric quantization and the orbifold case.

Holomorphic Morse inequalities are global results encoded with local features, which can be obtained by the study of the behaviors of heat kernels, Bergman kernels or Szegő kernels. In this paper, we consider the asymptotic Bergman kernel functions \cite{Be04,HM:14} inspired by Berndtsson's work \cite{BB:02}. Comparing to the previous proof based on the spectral theory of Kodaira Laplacian, this method involves relatively elementary techniques.
 The key observation of this paper is the refinement of the $L^2$ weak Morse inequality as follows.
\begin{proposition}\label{prop_main}
		Let $(X,\omega)$ be a Hermitian manifold of dimension $n$ and let $(L,h^L)$ and $(E,h^E)$ be holomorphic Hermitian line bundles on $X$. Let $0\leq q\leq n$.
		Suppose there exist a compact subset $K\subset X$ and $C_0>0$ such that,
		for sufficiently large $k$, we have
		\begin{equation}\label{eq_ofe}
		\left(1-\frac{C_0}{k}\right)||s||^2\leq \frac{C_0}{k}\left(||\ddbar^E_ks||^2+||\ddbar^{E*}_{k}s||^2\right)+\int_{K} |s|^2 dv_X
		\end{equation}
		for $s\in \Dom(\ddbar^E_k)\cap \Dom(\ddbar^{E*}_{k})\cap L^2_{0,q}(M,L^k\otimes E)$.
	Then  we have
	\begin{equation}
	\limsup_{k\rightarrow \infty}n!k^{-n}\dim H_{(2)}^{q}({X},{L}^k\otimes E)\leq \int_{K(q)}(-1)^q c_1(L,h^L)^n.
	\end{equation}
\end{proposition}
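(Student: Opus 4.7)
The plan is to pass from $H^q_{(2)}$ to harmonic representatives, localize these to the compact set $K$ via the assumed estimate, and then apply a pointwise asymptotic for the on-diagonal Bergman kernel of the harmonic space. First, for $k>C_0$ one may rewrite (\ref{eq_ofe}) as a standard ``fundamental estimate with compact perturbation'', in which $\|s\|^2$ is dominated by a constant multiple of $\|\ddbar^E_k s\|^2+\|\ddbar^{E*}_k s\|^2+\int_K|s|^2\,dv_X$. Combined with elliptic regularity of harmonic forms and Rellich compactness on $K$, this forces $\ddbar^E_k$ to have closed range on $(0,q)$-forms and makes the harmonic space
$$\mathcal{H}^q_{(2)}(X,L^k\otimes E):=\ker\ddbar^E_k\cap\ker\ddbar^{E*}_k\subset L^2_{0,q}(X,L^k\otimes E)$$
finite-dimensional, so that the Hodge decomposition identifies $\mathcal{H}^q_{(2)}\cong H^q_{(2)}(X,L^k\otimes E)$. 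It therefore suffices to bound $\dim\mathcal{H}^q_{(2)}(X,L^k\otimes E)$.

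For any harmonic $s$, (\ref{eq_ofe}) degenerates to $(1-C_0/k)\|s\|^2\leq\int_K|s|^2\,dv_X$. Picking an $L^2$-orthonormal basis $\{s_j\}$ of $\mathcal{H}^q_{(2)}(X,L^k\otimes E)$ and summing over $j$ gives
$$\dim\mathcal{H}^q_{(2)}(X,L^k\otimes E)=\sum_j\|s_j\|^2\leq\frac{k}{k-C_0}\int_K B_k^q(x)\,dv_X(x),$$
where $B_k^q(x):=\sum_j|s_j(x)|^2$ is the on-diagonal Bergman kernel function of the harmonic space (basis-independent).

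Next I would invoke the pointwise asymptotic for $B_k^q$: for every $x\in X$,
$$\limsup_{k\to\infty}\frac{n!}{k^n}B_k^q(x)\leq\mathbf{1}_{X(q)}(x)\cdot(-1)^q\frac{c_1(L,h^L)^n_x}{dv_X(x)},$$
together with a locally uniform majorant $k^{-n}B_k^q(x)\leq C$ on compact sets for $k$ large. These are the ``asymptotic Bergman kernel function'' estimates of \cite{Be04,BB:02,HM:14}, which are local in nature: they follow from the Bochner--Kodaira--Nakano formula applied to model operators on $\mathbb{C}^n$ and hence hold on any Hermitian manifold regardless of global geometry. Combining with the previous step via the reverse Fatou lemma on $K$ (justified by the uniform majorant) yields the desired bound
$$\limsup_{k\to\infty}\frac{n!}{k^n}\dim H^q_{(2)}(X,L^k\otimes E)\leq\int_{K(q)}(-1)^q c_1(L,h^L)^n.$$
The main obstacle is the pointwise Bergman asymptotic together with its uniform majorant; once this input is in hand, the rest is essentially bookkeeping.
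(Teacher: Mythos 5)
Your proposal is correct and follows essentially the same route as the paper: identify $H^q_{(2)}$ with the harmonic space (the fundamental estimate guaranteeing $\overline{H}^{0,q}_{(2)}=H^{0,q}_{(2)}\cong\cH^{0,q}$ as in \cite[Theorem 3.1.8]{MM07}), observe that \eqref{eq_ofe} degenerates on harmonic forms to $\|s\|^2\leq \tfrac{k}{k-C_0}\int_K|s|^2\,dv_X$, sum over an orthonormal basis to bound the dimension by $\int_K B^q_k$, and conclude via Berman's pointwise upper bound together with the uniform majorant on $K$ and the reverse Fatou lemma. The only cosmetic difference is that the paper first isolates the intermediate statement with a general sequence $a_k\to 1$ (Proposition \ref{cor_l2_morse}) before specializing to $a_k=k/(k-C_0)$, which you do directly.
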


 Thanks to the optimal fundamental estimate \eqref{eq_ofe} and the asymptotic estimate of the upper bound of the Bergman kernel function, we can directly integrate the Bergman kernel function over a compact subset to get the dimension of harmonic space. As applications, we improved weak holomorphic Morse inequalities in a uniform way as follows. It is remarkable that our new proof is direct, and moreover, the upper-bound of asymptotic dimension of cohomology are sharper than the previous in literatures (Explicitly we replace relatively compact domain $U$ including $K$ inside by $K$). Based on the Bochner-Kodaira-Nakano formulas, we have the privilege to give uniform and simple proofs of weak holomorhpic Morse inequalities in various situations.

\begin{theorem}[Ma-Marinescu]\label{thm_complete}
	Let $(X,\Theta)$ be a complete Hermitian manifold of dimension $n$. Let $(L,h^L)$ be a holomorphic Hermitian line bundle  on $X$ such that $\Theta=c_1(L,h^L)$ on $X\setminus M$ for a compact subset $M$. Then for each $1\leq q\leq n$, we have
	\begin{equation}
	\limsup_{k\rightarrow \infty}n!k^{-n}\dim H^{q}_{(2)}({X},{L}^k\otimes K_X)\leq \int_{M(q)}(-1)^q c_1(L,h^L)^n.
	\end{equation}
\end{theorem}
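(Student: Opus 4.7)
The strategy is to apply Proposition~\ref{prop_main} with $E = K_X$ and $K$ a suitable compact neighborhood of $M$; the entire work lies in verifying the optimal fundamental estimate~\eqref{eq_ofe} in this setting, with a cutoff trick producing the sharp leading constant $1$ in front of $\int_K|s|^2\,dv_X$. I would first identify $L^2_{0,q}(X, L^k\otimes K_X)$ with $L^2_{n,q}(X, L^k)$ via the canonical bundle isomorphism, transporting $\ddbar^{E}_k$ and its Hilbert-space adjoint to the ordinary $\ddbar$ and $\ddbar^*$ on $L^k$-valued $(n,q)$-forms. Since $\Theta = c_1(L, h^L)$ on $X\setminus M$, the metric $\Theta$ is K\"ahler there with $\sqrt{-1}R^L = 2\pi\Theta$, so the Demailly torsion operators in the Bochner--Kodaira--Nakano formula vanish pointwise on $X\setminus M$, and the curvature operator $[\sqrt{-1}R^{L^k},\Lambda_\Theta]$ acts on $(n,q)$-forms there as multiplication by $2\pi kq$.

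Next I would fix a compact $K$ with $M \Subset K^\circ$ and a smooth cutoff $\chi: X\to[0,1]$ equal to $1$ on $X\setminus K$ and to $0$ on a neighborhood of $M$, so that $\supp\chi \subset X\setminus M$. For smooth compactly supported $s$ (after the identification above), $\chi s$ is supported away from $M$, so the Bochner--Kodaira--Nakano inequality yields, with no torsion contribution,
\begin{equation*}
||\ddbar(\chi s)||^2 + ||\ddbar^*(\chi s)||^2 \,\geq\, 2\pi kq\,||\chi s||^2.
\end{equation*}
Combining this with the Leibniz bound $||\ddbar(\chi s)||^2 \leq 2||\ddbar s||^2 + 2\|d\chi\|_\infty^2\int_K|s|^2\,dv_X$ and the analogous estimate for $\ddbar^*$, together with $||\chi s||^2 \geq ||s||^2 - \int_K|s|^2\,dv_X$, and dividing through by $2\pi kq$, I would arrive at
\begin{equation*}
||s||^2 \,\leq\, \frac{C_0}{k}\bigl(||\ddbar s||^2 + ||\ddbar^* s||^2\bigr) + \Bigl(1+\tfrac{C_1}{k}\Bigr)\int_K|s|^2\,dv_X.
\end{equation*}
Absorbing $\tfrac{C_1}{k}\int_K|s|^2\leq \tfrac{C_1}{k}||s||^2$ into the left-hand side yields precisely~\eqref{eq_ofe}. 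Completeness of $\Theta$ together with the Andreotti--Vesentini density lemma extend the inequality to all of $\Dom(\ddbar^{E}_k)\cap\Dom(\ddbar^{E*}_k)\cap L^2_{0,q}(X, L^k\otimes E)$, after which Proposition~\ref{prop_main} delivers the theorem.

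The main obstacle I foresee is obtaining the leading coefficient exactly $1$ in front of $\int_K|s|^2\,dv_X$: applying Bochner--Kodaira--Nakano directly to $s$ leaves a curvature-plus-torsion error over $M$ whose coefficient in front of $\int_M|s|^2\,dv_X$ is an $O(1)$ constant typically larger than $1$, which no enlargement of $K$ can absorb. The cutoff $\chi$ sidesteps this by concentrating the curvature positivity $2\pi kq$ entirely on $\chi s$, which is supported in the K\"ahler region $X\setminus M$ where the Bochner--Kodaira--Nakano inequality is sharp, and relegating all $M$-errors to the $(1-C_0/k)||s||^2$ absorption on the left.
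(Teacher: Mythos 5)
Your proposal follows essentially the same route as the paper: its Lemma \ref{lem_complete} establishes the optimal fundamental estimate for $(n,q)$-forms with values in $L^k$ by exactly your argument --- vanishing torsion and the curvature identity $[\sqrt{-1}R^L,\Lambda]s=qs$ on the K\"ahler region $X\setminus M$, a cutoff supported away from $M$ with Leibniz-rule error terms of order $k^{-1}\|s\|^2$, and Andreotti--Vesentini density from completeness --- before invoking Proposition \ref{prop_main}. The only step you leave implicit is that the resulting integral over $K(q)$ equals that over $M(q)$, which holds since $c_1(L,h^L)=\Theta>0$ on $X\setminus M$ so no points of index $q\geq 1$ lie outside $M$.
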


  \begin{theorem}[Bouche]\label{thm_q_convex}
  	Let $X$ be a $q$-convex manifold of dimension $n$ and $1\leq q \leq n$. Let $(L,h^L)$ and $(E,h^E)$ be holomorphic Hermitian line bundles on $X$. Suppose $R^L$ has at least $n-s+1$ non-negative eigenvalues on $X\setminus M$ for a compact subset $M$ with $1\leq s\leq n$.
  	Then for each $s+q-1\leq j\leq n$, we have
  	\begin{equation}
  	\limsup_{k\rightarrow \infty}n!k^{-n}\dim H^{j}({X},{L}^k\otimes E)\leq \int_{M(j)}(-1)^j c_1(L,h^L)^n.
  	\end{equation}
  \end{theorem}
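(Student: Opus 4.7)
The plan is to verify the hypotheses of Proposition~\ref{prop_main} after modifying the Hermitian data on $X$ and then to invoke it. The work splits into three ingredients: a complete Hermitian metric $\omega$ on $X$; a modified metric $h^L_1$ on $L$ whose curvature is very positive at infinity but unchanged near $M$; an identification of $H^j(X, L^k \otimes E)$ with the corresponding $L^2$ cohomology for $j \geq s+q-1$.

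By $q$-convexity, fix a smooth exhaustion $\rho \colon X \to \mathbb{R}$ with $i\partial\bar\partial \rho$ having at least $n-q+1$ positive eigenvalues on $\{\rho > c_0\}$, where $c_0$ is chosen so that $M \subset \{\rho < c_0\}$. Pick a convex increasing $\chi \in C^\infty(\mathbb{R})$ vanishing on $(-\infty, c_0]$ with $\chi'$ prescribed arbitrarily large on $[c_0+1,\infty)$, and set $h^L_1 := h^L e^{-\chi(\rho)}$. Then $h^L_1 \equiv h^L$ on a neighborhood of $M$, while
\[
R^{L,1} = R^L + \chi'(\rho)\, \partial\bar\partial \rho + \chi''(\rho)\, \partial\rho \wedge \bar\partial \rho
\]
on $X \setminus \{\rho \leq c_0\}$. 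A Weyl-type eigenvalue count combining the $n-s+1$ non-negative eigenvalues of $iR^L$ on $X \setminus M$ with the $n-q+1$ positive eigenvalues of $i\partial\bar\partial\rho$ on $\{\rho > c_0\}$ shows that outside $K := \{\rho \leq c_0+1\}$ the curvature $iR^{L,1}$ has at most $s+q-2$ small eigenvalues and at least $n-s-q+2$ eigenvalues which can be made as large as we like by the choice of $\chi'$. Also take a complete Hermitian metric $\omega$ on $X$, constructible from $\rho$ near infinity.

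For the optimal fundamental estimate \eqref{eq_ofe} on $(0,j)$-forms with $j \geq s+q-1$, apply the Bochner--Kodaira--Nakano formula with torsion. Modulo an $O(\|s\|^2)$ remainder involving the torsion of $\omega$ and the curvature of $E$ and $K_X$, it gives
\[
\|\bar\partial^E_k s\|^2 + \|\bar\partial^{E*}_k s\|^2 \geq \int_X \langle [iR^{L^k \otimes E, 1}, \Lambda_\omega] s, s\rangle\, dv_X - C\|s\|^2.
\]
For a $(0,j)$-form the pointwise curvature pairing is bounded below by $k$ times the sum of the $j$ smallest eigenvalues of $iR^{L,1}$ (with a bounded Ricci correction). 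Since at most $s+q-2$ eigenvalues are small and $j \geq s+q-1$, any such sum contains at least $j-(s+q-2)\geq 1$ of the large positive eigenvalues, so the pairing is $\geq C_1 k - C_2$ on $X \setminus K$. Completeness of $\omega$ then yields \eqref{eq_ofe} for all $s \in \Dom(\bar\partial^E_k)\cap\Dom(\bar\partial^{E*}_k)\cap L^2_{0,j}$ by density.

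Finally, Andreotti--Grauert together with standard $L^2$ theory identifies $H^j(X, L^k \otimes E) \cong H^j_{(2)}(X, L^k \otimes E, h^L_1, h^E, \omega)$ for $j \geq s+q-1$ and $k$ large, so Proposition~\ref{prop_main} yields a bound by $\int_{K(j)} (-1)^j c_1(L, h^L_1)^n$. On $K \setminus M$, $iR^L$ has at most $s-1$ negative eigenvalues and $\chi'(\rho)\, i\partial\bar\partial\rho$ contributes at most $q-1$, while $\chi''(\rho)\, i\partial\rho \wedge \bar\partial\rho$ is semi-positive; by the same Weyl count, $iR^{L,1}$ has at most $s+q-2 < j$ negative eigenvalues there, so $K(j)\cap(K\setminus M)=\emptyset$. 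Combined with $h^L_1 \equiv h^L$ on $M$, the right-hand side collapses to $\int_{M(j)}(-1)^j c_1(L, h^L)^n$, as required. The main technical point throughout is the eigenvalue bookkeeping: the threshold $j \geq s+q-1$ is exactly where the $n-s-q+2$ guaranteed large positive eigenvalues dominate the curvature operator on $(0,j)$-forms and where the index-$j$ locus of $h^L_1$ reduces to $M(j)$.
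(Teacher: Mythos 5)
The central step of your argument --- the optimal fundamental estimate on forms supported outside $K$ --- has a genuine gap. You assert that outside $K$ the modified curvature $iR^{L,1}=iR^L+\chi'(\rho)\,i\partial\bar\partial\rho+\chi''(\rho)\,i\partial\rho\wedge\bar\partial\rho$ has ``at most $s+q-2$ small eigenvalues,'' so that the sum of the $j$ smallest eigenvalues becomes large positive once it must contain one of the $n-s-q+2$ large ones. The min--max count does produce at least $n-s-q+2$ eigenvalues of size $\gtrsim\chi'$, but it gives no upper bound on how \emph{negative} the remaining $s+q-2$ eigenvalues are: $q$-convexity permits $i\partial\bar\partial\rho$ to have up to $q-1$ strictly negative eigenvalues, and multiplying by a large $\chi'$ amplifies these exactly as much as the positive ones. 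Hence the sum of the $j$ smallest eigenvalues of $iR^{L,1}$ can be of order $-\chi'$, and the claimed bound $\geq C_1k-C_2$ fails. This is precisely the difficulty that the paper's Lemma \ref{lowbd_rho_lem} (i.e.\ \cite[Lemma 3.5.3]{MM07}) removes: one first builds a Hermitian metric $\omega$ adapted to $\varrho$ for which $\langle(\dbar\ddbar\varrho)(w_l,\ol{w}_k)\ol{w}^k\wedge i_{\ol{w}_l}s,s\rangle\geq C_1|s|^2$ for every $(0,j)$-form with $j\geq q$, in effect rescaling so that the positive eigenvalues of $\dbar\ddbar\varrho$ dominate any sum of $j$ of its eigenvalues. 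With that metric the fundamental estimate holds for all $j\geq q$ (the curvature of $L$ contributes only a bounded error absorbed by $k\chi'C_1$), and the threshold $j\geq s+q-1$ enters only afterwards, in the purely set-theoretic inclusion $X_c(j,h^L_\chi)\subset M(j)$. Your final bookkeeping reducing $\int_{K(j)}$ to $\int_{M(j)}$ is sound, because there one only needs the \emph{number} of negative eigenvalues, not their size.

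A secondary concern is your decision to work on all of $X$ with a complete metric. Even granting the pointwise curvature bound, you would need it, together with the torsion and $R^E$, $R^{K_X}$ error terms, uniformly on the non-compact set $X\setminus K$, and you would need the identification $H^j(X,L^k\otimes E)\cong H^j_{(2)}(X,\cdot)$ for the global $L^2$ complex, which is not an off-the-shelf consequence of Andreotti--Grauert. The paper instead works on the relatively compact sublevel set $X_c$ with $\ddbar$-Neumann boundary conditions: the Bochner--Kodaira--Nakano formula with boundary term (Theorem \ref{BKNwithbd}) gives the estimate on $B^{0,j}(X_c,L^k\otimes E)$, the boundary integral is nonnegative again by Lemma \ref{lowbd_rho_lem}, all constants are controlled on the compact set $\overline{X}_v$, and the isomorphisms $H^j(X,L^k)\cong H^j(X_v,L^k)\cong H^{0,j}_{(2)}(X_c,L^k)$ for $j\geq q$ are quoted from \cite{MM07}. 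I would restructure your proof around the adapted metric and the bounded domain before invoking Proposition \ref{prop_main}.
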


\begin{theorem}[Ma-Marinescu]\label{thm_psc}
	Let $M\Subset X$ be a smooth pseudoconvex domain in a complex manifold $X$ of dimension $n$. Let $(L,h^L)$ and $(E,h^E)$ be holomorphic Hermitian line bundles on $X$.  Let $(L,h^L)$ be positive in a neighbourhood of the boundary $bM$ of $M$.
	Then for any $q\geq 1$, we have
	\begin{equation}
	\limsup_{k\rightarrow \infty}n!k^{-n}\dim H^{q}_{(2)}({M},{L}^k\otimes E)\leq \int_{M(q)}(-1)^q c_1(L,h^L)^n.
	\end{equation}
\end{theorem}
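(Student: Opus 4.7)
The plan is to verify the optimal fundamental estimate \eqref{eq_ofe} on the (noncompact) Hermitian manifold $M$ for a suitable compact subset $K\subset M$, and then invoke Proposition~\ref{prop_main}, noting that $K\subset M$ implies $K(q)\subset M(q)$. Fix a Hermitian metric $\omega$ on $X$ and open neighbourhoods $V_2\Subset V_1$ of $bM$ in $X$ on which $iR^L\geq c\,\omega$ for some $c>0$, and set $K:=\ov M\setminus V_2$; since $bM\subset V_2$, this is a compact subset of $M$. Choose smooth functions $\chi,\psi\colon\ov M\to[0,1]$ with $\chi^2+\psi^2\equiv 1$, $\chi\equiv 1$ on a neighbourhood of $bM$ contained in $V_2$, and $\supp\chi\subset V_1$. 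Then $\supp\psi\subset K$ and both $d\chi,d\psi$ are supported in $K$.

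The main device is a \emph{square partition-of-unity} cancellation. Differentiating $\chi^2+\psi^2\equiv 1$ gives $\chi\,\ddbar\chi+\psi\,\ddbar\psi\equiv 0$. Expanding $\|\ddbar^E_k(\chi s)\|^2+\|\ddbar^E_k(\psi s)\|^2$ via $\ddbar^E_k(\chi s)=\ddbar\chi\wedge s+\chi\,\ddbar^E_k s$ (and similarly for $\psi$), the cross terms with $\ddbar^E_k s$ combine into a real part of $\int\langle(\chi\,\ddbar\chi+\psi\,\ddbar\psi)\wedge s,\ddbar^E_k s\rangle$ and thus vanish, leaving
\begin{equation*}
\|\ddbar^E_k(\chi s)\|^2+\|\ddbar^E_k(\psi s)\|^2=\|\ddbar^E_k s\|^2+\|\ddbar\chi\wedge s\|^2+\|\ddbar\psi\wedge s\|^2,
\end{equation*}
and an analogous identity for $\ddbar^{E*}_k$ (with contraction operators $(\ddbar\chi)^\sharp,(\ddbar\psi)^\sharp$ in place of the wedges). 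Dropping the $\psi$-piece on the left and bounding $|\ddbar\chi|,|\ddbar\psi|$ uniformly on their support $K$ yields
\begin{equation*}
\|\ddbar^E_k(\chi s)\|^2+\|\ddbar^{E*}_k(\chi s)\|^2\leq \|\ddbar^E_k s\|^2+\|\ddbar^{E*}_k s\|^2+C\int_K|s|^2\,dv_X.
\end{equation*}

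Since $\chi s\in\Dom(\ddbar^{E*}_k)$ is supported in $V_1\cap\ov M$ where $L$ is positive, the Bochner-Kodaira-Nakano formula with $\ddbar$-Neumann boundary conditions applies: pseudoconvexity makes the Levi-form boundary term nonnegative (and hence droppable), while the interior curvature term gives $\langle[iR^{L^k\otimes E},\Lambda]\chi s,\chi s\rangle\geq(kc-C')|\chi s|^2$ for $q\geq 1$ once the bounded contributions from $iR^E$ and the torsion of $\omega$ are absorbed into $C'$. Hence for $k$ large $\|\chi s\|^2\leq \tfrac{2}{kc}(\|\ddbar^E_k(\chi s)\|^2+\|\ddbar^{E*}_k(\chi s)\|^2)$. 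Combining this with the pointwise identity $|s|^2=|\chi s|^2+|\psi s|^2$, the bound $\|\psi s\|^2\leq \int_K|s|^2\,dv_X$, and absorbing the resulting $\tfrac{2C}{kc}\int_K|s|^2\,dv_X\leq \tfrac{2C}{kc}\|s\|^2$ term into the left-hand side yields \eqref{eq_ofe} with $C_0:=\max(2/c,2C/c)$. Proposition~\ref{prop_main}, together with $K(q)\subset M(q)$ and the pointwise nonnegativity of $(-1)^q c_1(L,h^L)^n$ on $M(q)$, then finishes the proof. The main obstacle is the coefficient exactly $1$ in front of $\int_K|s|^2\,dv_X$: a naive split $s=\chi s+(1-\chi)s$ would introduce a factor of $2$ via $\|a+b\|^2\leq 2\|a\|^2+2\|b\|^2$, and the square-partition device $\chi^2+\psi^2=1$ is precisely what eliminates it.
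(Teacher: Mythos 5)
Your proof is correct and follows the same overall architecture as the paper's: establish the optimal fundamental estimate \eqref{eq_ofe} for $M$ with a compact $K\Subset M$ by localizing near $bM$, applying the Bochner--Kodaira--Nakano formula with boundary term (Theorem~\ref{BKNwithbd}) where pseudoconvexity kills the Levi boundary term and positivity of $L$ gives the $kc$ gain, and then invoking Proposition~\ref{prop_main} with $K(q)\subset M(q)$. Where you genuinely diverge is in the localization bookkeeping. The paper does \emph{not} use a quadratic partition of unity: it takes a single cutoff $\phi$ with $0\leq\phi\leq1$ and $\phi\equiv1$ on a collar of $bM$, and obtains the coefficient $1$ on $\int_{K'}|s|^2dv_X$ from the trivial pointwise inequality $\|\phi s\|^2\geq\int_{M\setminus K'}|s|^2dv_X=\|s\|^2-\int_{K'}|s|^2dv_X$ (Lemma~\ref{K'lem}); the commutator terms are then handled with deliberately lossy constants as in \eqref{cuteq_k} (factors $5$ and $12C_1$), which is harmless because everything multiplying $\|s\|^2$ or the energy is $O(k^{-1})$ and gets absorbed. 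So your closing remark that a naive split would ruin the coefficient $1$ is not quite the right diagnosis: the danger is only in the $\int_K|s|^2$ term, and the paper avoids it without the IMS-type identity. Your $\chi^2+\psi^2=1$ cancellation is a clean and slightly sharper alternative that packages the same absorption argument; both routes buy the same conclusion.

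Two small repairs. First, with $K:=\ov M\setminus V_2$ you cannot conclude $\supp\psi\subset K$: $\psi$ vanishes only where $\chi\equiv1$, which is a neighbourhood $W$ of $bM$ possibly strictly smaller than $V_2$; take $K:=\ov M\setminus W$ instead (still compact in $M$, and it also contains $\supp d\chi\cup\supp d\psi$). Second, Theorem~\ref{BKNwithbd} applies to smooth forms in $B^{0,q}(M,L^k\otimes E)$, so the estimate must first be proved there and then extended to all of $\Dom(\ddbar^E_k)\cap\Dom(\ddbar^{E*}_k)\cap L^2_{0,q}$ by density in the graph norm (the paper cites \cite[Lemma 3.5.1]{MM07} for this); note that $\chi s$ does satisfy the $\ddbar$-Neumann condition since $\chi$ is scalar. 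Neither point affects the substance of your argument.
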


\begin{theorem}[Marinescu] \label{thm_w1c}
	Let $X$ be a weakly $1$-complete manifold of dimension $n$. Let $(L,h^L)$ and $(E,h^E)$ be a holomorphic Hermitian line bundles on $X$. Assume $K\subset X_c:=\{\varphi<c\}$ is a compact subset and $(L,h^L)$ is Griffith $q$-positive on $X\setminus K$ with $q\geq 1$. Then, there exits a Hermitian metric $\omega$ on $X$ and as $k\rightarrow \infty$, for any $j\geq q$,
	\begin{equation}
		\limsup_{k\rightarrow \infty}n!k^{-n}\dim H^j_{(2)}(X_c,L^k\otimes E)\leq \rank(E)\int_{K(j)}(-1)^j c_1(L,h^L)^n.
	\end{equation}
	In particular, if $L>0$ on $X\setminus K$, the inequalities hold for $H^j(X,L^k\otimes E)$ with all $j\geq 1$.
\end{theorem}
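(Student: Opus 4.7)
The plan is to apply Proposition \ref{prop_main} to $X_c$ equipped with a carefully engineered complete Hermitian metric $\omega$ and with $L$ endowed with the modified Hermitian metric $h^L_\chi := h^L e^{-\chi(\varphi)}$, where $\chi : (-\infty, c) \to \R$ is a smooth convex increasing function chosen to vanish on a neighbourhood of $\{\varphi \leq c_0\} \supset K$ (so that $c_1(L, h^L_\chi) = c_1(L, h^L)$ near $K$, keeping $\int_{K(j)}(-1)^j c_1(L,h^L)^n$ invariant) and whose derivatives blow up fast enough as $\varphi \to c$ to ensure both completeness of $\omega$ on $X_c$ and strict positivity of the modified curvature outside $K$. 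The standard construction on weakly $1$-complete manifolds supplies such $\chi$ together with an associated complete $\omega$.

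The crux is to establish the fundamental estimate \eqref{eq_ofe} on $(0,j)$-forms with $j \geq q$. For compactly supported $s \in \Omega^{0,j}_0(X_c, L^k \otimes E)$, the Bochner--Kodaira--Nakano formula with values in $L^k \otimes E$ (using $\omega$ and $(h^L_\chi)^k \otimes h^E$) yields, after absorbing torsion terms of $\omega$ and bounded curvature contributions from $E$,
\begin{equation*}
||\ddbar s||^2 + ||\ddbar^* s||^2 \geq \int_{X_c} k \, \langle [\sqrt{-1}R^{L,\chi}, \Lambda_\omega] s, s\rangle \, dv_X - C ||s||^2.
\end{equation*}
On $X_c \setminus K$, the Griffiths $q$-positivity of $(L,h^L)$, reinforced by the semi-positive $(1,1)$-form $\sqrt{-1}\pr\ddbar \chi(\varphi)$, produces a uniform pointwise lower bound $c_1 |s|^2$ on the commutator integrand for $j \geq q$. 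Splitting the integral into contributions over $K$ and $X_c \setminus K$, rearranging, and using completeness of $\omega$ to extend from compactly supported forms to $\Dom(\ddbar^E_k) \cap \Dom(\ddbar^{E*}_k)$ via Friedrichs-type density, one recovers \eqref{eq_ofe} with $C_0$ independent of $k$.

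Proposition \ref{prop_main} then yields the desired bound on $\dim H^j_{(2)}(X_c, L^k\otimes E)$; the factor $\rank(E)$ is inherited from the trace over the fibres of $E$ in the on-diagonal Bergman kernel asymptotics. For the ``in particular'' statement, when $(L,h^L)$ is positive on $X\setminus K$, a Takegoshi/Andreotti--Grauert type argument identifies $H^j(X, L^k\otimes E) \cong H^j_{(2)}(X_c, L^k\otimes E)$ for every $j \geq 1$ and $k$ sufficiently large, transporting the estimate to the usual cohomology. The main obstacle is the calibration of $\chi$ in the transition region $\{c_0 \leq \varphi < c\}$: it must grow rapidly enough to absorb all subleading curvature and torsion terms uniformly in $k$ and to enforce the desired positivity outside $K$, while vanishing in a neighbourhood of $K$ so as not to disturb the leading-order integral on the right-hand side of the inequality.
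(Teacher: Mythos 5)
Your route is genuinely different from the paper's, and it is essentially the classical complete-metric argument of Marinescu's original proof -- which is precisely what this paper sets out to avoid ("we reprove the main result in \cite{M:92} without complete metric"). The paper works directly on the relatively compact sublevel set $X_c$, which is a smooth \emph{pseudoconvex} domain because $\varphi$ is plurisubharmonic: it keeps $h^L$ and uses the Bochner--Kodaira--Nakano formula with boundary term (Theorem \ref{BKNwithbd}), discarding the boundary integral $\int_{bX_c}\cL_\rho(s,s)\,dv_{bX_c}\geq 0$ by pseudoconvexity, and obtains density from the $\ddbar$-Neumann formalism ($B^{0,j}(X_c,\cdot)$ dense in $\Dom(\ddbar)\cap\Dom(\ddbar^*_H)$ in graph norm) rather than from completeness. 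This buys a statement about $H^j_{(2)}(X_c,L^k\otimes E)$ with respect to a metric $\omega$ on all of $X$ and the unmodified $h^L$, exactly as claimed.

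Two concrete gaps in your version. First, the positivity mechanism is misattributed: Griffiths $q$-positivity of $R^L$ does \emph{not} by itself give a uniform lower bound $c_1|s|^2$ on the curvature term for $(0,j)$-forms with $j\geq q$, since a sum $\sum_{l\in J}\lambda_l$ over $|J|=j$ can still be negative when the (at most $q-1$) negative eigenvalues are large in modulus; and the term $\chi'(\varphi)\,\dbar\ddbar\varphi$ cannot repair this because $\varphi$ is merely plurisubharmonic, so it contributes only a semipositive quantity. What is actually needed is the anisotropic rescaling of $\omega$ near $bX_c$ as in Lemma \ref{lowbd_rho_lem} (shrinking the metric in the possibly negative directions), which is the paper's "there exists a metric $\omega$" step; your proposal should make this the engine of positivity rather than $\chi$. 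Second, replacing $\omega$ by a complete metric on $X_c$ blowing up at $bX_c$ and $h^L$ by $h^Le^{-\chi(\varphi)}$ with $\chi\to\infty$ changes the $L^2$ spaces: the group you bound is the $L^2$-cohomology for these singular-at-the-boundary data, which is not a priori the group $H^j_{(2)}(X_c,L^k\otimes E)$ in the statement (defined via a metric on $X$, hence quasi-isometrically inequivalent to yours near $bX_c$). Closing this requires a separate isomorphism theorem, which you do not supply except in the strictly positive case. (Minor point: the $\rank(E)$ remark is vacuous here since $E$ is a line bundle.) The "in particular" step via Takegoshi agrees with the paper.
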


Without the optimal fundamental estimate \ref{eq_ofe},
we also can obtain:

\begin{theorem}[Chiose-Marinescu-Todor]\label{thm_cover}
	Let $(\widetilde X,\widetilde \omega)$ be a Hermitian manifold of dimension $n$ on which a discrete
	group $\Gamma$ acts holomorphically, freely and properly such that $\widetilde{\omega}$ is a $\Gamma$-invariant Hermitian
	metric and the quotient $X=\widetilde X/\Gamma$ is compact. Let $(\widetilde L,h^{\widetilde L})$ and $(\widetilde E,h^{\widetilde E})$ be $\Gamma$-invariant
	holomorphic Hermitian line bundles on $\widetilde X$. Then for $0\leq q\leq n$,
	\begin{equation}
	\limsup_{k\rightarrow \infty}n!k^{-n}\dim_{\Gamma}\overline H^{q}_{(2)}(\widetilde{X},\widetilde{L}^k\otimes \til{E})\leq \int_{X(q)}(-1)^q c_1(L,h^L)^n.
	\end{equation}
\end{theorem}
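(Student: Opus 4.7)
The plan is to exploit cocompactness of the $\Gamma$-action to replace the optimal fundamental estimate \eqref{eq_ofe} by direct integration of a pointwise Bergman kernel upper bound over a relatively compact fundamental domain. Since we work with \emph{reduced} $L^2$-cohomology, the $L^2$-Hodge-Kodaira decomposition identifies it with the harmonic space, and the von Neumann dimension is recovered by the Atiyah $\Gamma$-trace formula; no spectral gap argument is needed.

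First I would identify
\[
\overline H^q_{(2)}(\widetilde X,\widetilde L^k\otimes \widetilde E)\cong \mathcal{H}^q_{(2)}(\widetilde X,\widetilde L^k\otimes \widetilde E)
\]
via the $L^2$-Hodge-Kodaira decomposition, which is valid because the pullback to $\widetilde X$ of any Hermitian metric on the compact base $X$ is automatically complete. Since $\widetilde \omega$, $h^{\widetilde L}$ and $h^{\widetilde E}$ are $\Gamma$-invariant, the Bergman projector $P^q_k$ onto $\mathcal{H}^q_{(2)}$ commutes with $\Gamma$, and its smooth Schwartz kernel delivers
\[
\dim_\Gamma \mathcal{H}^q_{(2)}(\widetilde X,\widetilde L^k\otimes \widetilde E)=\int_U B^q_k(x)\,dv_{\widetilde X}(x),
\]
where $U\subset \widetilde X$ is any relatively compact fundamental domain for $\Gamma$ and $B^q_k(x)=\mathrm{tr}\,P^q_k(x,x)$ is the Bergman kernel function on the diagonal.

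Next I would invoke the pointwise asymptotic upper bound for $B^q_k$. Because this bound is established by localisation to a model flat operator at scale $O(k^{-1/2})$, it is purely local and transfers unchanged from the compact setting to the covering $\widetilde X$: for every $x\in \widetilde X$,
\[
B^q_k(x)\leq C k^n,\qquad \limsup_{k\to\infty} n!\,k^{-n}B^q_k(x)\leq (-1)^q \mathbf{1}_{\widetilde X(q)}(x)\det\!\bigl(\dot R^{\widetilde L}_x\bigr),
\]
the constant $C$ being uniform on $\overline U$. Both sides are $\Gamma$-invariant, so they descend to functions on $X$, with $\widetilde X(q)$ descending to $X(q)$.

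Finally, reverse Fatou with the integrable dominant $n!\,C$ on the precompact $U$ yields
\[
\limsup_{k\to\infty} n!\,k^{-n}\int_U B^q_k\,dv_{\widetilde X}\leq \int_U \limsup_{k\to\infty} n!\,k^{-n}B^q_k\,dv_{\widetilde X}\leq \int_{X(q)}(-1)^q c_1(L,h^L)^n,
\]
where $\Gamma$-invariance allows one to pass from integration over $U$ to integration over $X$. The principal obstacle is the transfer step for the pointwise Bergman kernel bound: one must verify that the off-diagonal decay and rescaling arguments supporting the bound in the compact case survive on the complete, non-compact $\widetilde X$. Cocompactness of $\Gamma$ is decisive here, as it provides uniform local geometry on $\widetilde X$ and reduces every local computation to its analogue on the compact quotient $X$; this is precisely why the optimal fundamental estimate \eqref{eq_ofe} can be dispensed with in this setting.
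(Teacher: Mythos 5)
Your proposal is correct and follows essentially the same route as the paper: identify $\dim_\Gamma$ of the harmonic space with $\int_U \widetilde B^q_k$ over a fundamental domain, apply the pointwise Bergman kernel bounds, and use reverse Fatou plus $\Gamma$-invariance to pass to $\int_{X(q)}$. The ``transfer step'' you flag as the principal obstacle is in fact already settled by the cited local results (Berman, Hsiao--Marinescu), which are stated for arbitrary Hermitian manifolds with the uniform bound holding on any compact subset such as $\overline U$.
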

 This paper is organized as follows. In Sect. \ref{Sec_pre} we introduce necessary facts related to $L^2$-cohomology and asymptotic Bergman kernel functions. In Sec. \ref{Sec_l2wmi} we prove the refinement of $L^2$ weak Morse inequalities Proposition \ref{prop_main} and Theorem \ref{thm_complete}--\ref{thm_cover} as applications. In Sec. \ref{Sec_specfct} further results linked to asymptotics of spectral function of lower energy forms for Kodaira Laplacian are given. The general framework and techniques are due to \cite{MM07,HM:14}.

\section{Preliminaries and notations} \label{Sec_pre}

Let $(X, \omega)$ be a Hermitian manifold of dimension $n$ and $(F, h^F)$ and $(L,h^L)$ be holomorphic Hermitian vector bundle on $X$ with $\rank(L)=1$.
Let $\Omega^{p,q}(X, F)$ be the space of smooth $(p,q)$-forms on $X$ with values in $F$ for $p,q\in \N$. If $\rank(F)=1$, the curvature of $(F, h^F)$ is defined by $R^F=\ddbar\dbar \log|s|^2_{h^{F}}$ for any local holomorphic frame $s$ and the Chern-Weil form of the first Chern class of $F$ is denoted by $c_1(F, h^F)=\frac{\sqrt{-1}}{2\pi}R^F$, which is a real $(1,1)$-form on $X$. If $\rank(F)=1$, by identifying $R^F(x)$, $x\in X$, with a Hermitian matrix via the Hermitian metric $\omega$, we can consider the numbers of positive, negative and zero eigenvalues of $R^F(x)$ , which are independent of the choice of $\omega$. The volume form is given by $dv_{X}:=\omega_n:=\frac{\omega^n}{n!}$.

\subsection{$L^2$-coholomogy}
Let $\Omega^{p,q}_0(X, F)$ be the subspace of
$\Omega^{p,q}(X, F)$ consisting of elements with compact support.
The $L^2$-scalar product on $\Omega^{p,q}_0(X, F)$ is given by
\begin{equation}\label{e:sp}
\langle s_1,s_2 \rangle=\int_X \langle s_1(x), s_2(x) \rangle_h dv_X(x)
\end{equation}
where $\langle\cdot,\cdot\rangle_h:=
\langle\cdot,\cdot\rangle_{h^F,\omega}$
is the pointwise Hermitian inner product induced by $\omega$ and $h^F$.
We denote by $L^2_{p,q}(X, F)$, the $L^2$ completion of $\Omega^{p,q}_0(X, F)$.

Let $\ddbar^{F}: \Omega_0^{p,q} (X, F)\rightarrow L^2_{p,q+1}(X,F) $ be the Dolbeault operator and let  $ \ddbar^{F}_{\max} $ be its maximal extension. From now on we still denote the maximal extension by $ \ddbar^{F} :=\ddbar^{F}_{\max} $ and  the corresponding Hilbert space adjoint by $\ddbar^{F*}:=\ddbar^{F*}_H:=(\ddbar^{F}_{\max})_H^*$. We write $\ddbar^F_k:=\ddbar^{L^k\otimes F}$ for simplification.
Consider the complex of closed, densely defined operators
$L^2_{p,q-1}(X,F)\xrightarrow{\ddbar^{F}}L^2_{p,q}(X,F)\xrightarrow{\ddbar^{F}} L^2_{p,q+1}(X,F)$,
then $(\ddbar^{F})^2=0$. By \cite[Proposition 3.1.2]{MM07}, the operator defined by
\begin{eqnarray}\label{eq40}\nonumber
\Dom(\square^{F})&=&\{s\in \Dom(\ddbar^{F})\cap \Dom(\ddbar^{F*}):
\ddbar^{F}s\in \Dom(\ddbar^{F*}),~\ddbar^{F*}s\in \Dom(\ddbar^{F}) \}, \\
\square^{F}s&=&\ddbar^{F} \ddbar^{F*}s+\ddbar^{F*} \ddbar^{F}s \quad \mbox{for}~s\in \Dom(\square^{F}),
\end{eqnarray}
is a positive, self-adjoint extension of Kodaira Laplacian, called the Gaffney extension.
	The space of harmonic forms $\cH^{p,q}(X,F)$ is defined by
	\begin{equation}\label{eq45}
	\cH^{p,q}(X,F):=\Ker(\square^{F})\cap  L^2_{p,q}(X,F) =\{s\in \Dom(\square^{F})\cap L^2_{p,q}(X, F): \square^{F}s=0 \}.
	\end{equation}
	The $q$-th reduced (resp. non-reduced) $L^2$-Dolbeault cohomology are defined by, respectively,
	\begin{equation}\label{eq46}
	\overline{H}^{0,q}_{(2)}(X,F):=\dfrac{\Ker(\ddbar^{F})\cap  L^2_{0,q}(X,F) }{[ \Im( \ddbar^{F}) \cap L^2_{0,q}(X,F)]}, \quad H^{0,q}_{(2)}(X,F):=\dfrac{\Ker(\ddbar^{F})\cap  L^2_{0,q}(X,F) }{ \Im( \ddbar^{F}) \cap L^2_{0,q}(X,F)},
	\end{equation}
	where $[V]$ denotes the closure of the space $V$.
According to the general regularity theorem of elliptic operators,
$s\in \cH^{p,q}(X,F) $ implies $s\in\Omega^{p,q}(X,F)$. By weak Hodge decomposition (cf.\ \cite[(3.1.21) (3.1.22)]{MM07}),
we have a canonical isomorphism for any $q\in \N$,
\begin{equation}\label{eq47}
\overline{H}^{0,q}_{(2)}(X,F)\cong \cH^{0,q}(X,F),
\end{equation} which associates to each cohomology class its unique harmonic representative. The sheaf cohomology of holomorphic sections of $F$ is isomorphic to the Dolbeault cohomology, $H^\bullet(X,F)\cong H^{0,\bullet}(X,F)$.

\subsection{Optimal fundamental estimates}
We say the \textbf{fundamental estimate} holds in bidegree $(0,q)$ for forms with values in $F$ with $0\leq q\leq n$, if there exist a compact subset $K\subset X$ and $C>0$ such that, for $s\in \Dom(\ddbar^F)\cap\Dom(\ddbar^{F,*})\cap L^2_{0,q}(X,F)$, we have
\begin{equation}
\|s\|^2\leq C\left(\|\ddbar^F s\|^2+\|\ddbar^{F*}\|^2+\int_K|s|^2dv_X\right).
\end{equation}
In this case, we have $\ov H_{(2)}^{0,q}(X,F)\cong H^{0,q}_{(2)}(X,F)$, see \cite[Theorem 3.1.8]{MM07}. We say the \textbf{optimal fundamental estimate} holds in bidegree $(0,q)$ for forms with values in $L^k\otimes F$ with $0\leq q\leq n$, if there exist a compact subset $K\subset X$ and $C_0>0$ such that, for sufficiently large $k$ we have
for $s\in \Dom(\ddbar^F_k)\cap \Dom(\ddbar^{F*}_{k})\cap L^2_{0,q}(M,L^k\otimes F)$,
\begin{equation}
\left(1-\frac{C_0}{k}\right)||s||^2\leq \frac{C_0}{k}\left(||\ddbar^F_ks||^2+||\ddbar^{F*}_{k,H}s||^2\right)+\int_{K} |s|^2 dv_X.
\end{equation}
With respect to forms with values in $L^k\otimes F$, optimal fundamental estimate holds implies fundamental estimate holds for sufficiently large $k$. The optimal means the coefficient of the term $\int_K |s|^2 dv_X$ is $1$, which is crucial in our approach.
In the sequel we will see, besides of the trivial case $X=K$, the optimal fundamental estimate holds on various possibly non-compact complex manifolds.

For the purpose of the optimal fundamental estimates fulfilled in various situations, we need the following Bochner-Kodaira-Nakano formulas in the general form \cite{Dem:86}, which is extremely useful in complex geometry of vector bundles \cite{Dem:12} and has geometric applications on CR manifolds \cite{HMW20}.
See \cite[Theorem 1.4.12]{MM07} for a detailed proof.

\begin{theorem}[Demailly's Bochner-Kodaira-Nakano formula]
	On $\Omega^{\bullet,\bullet}(X,F)$, we have
	\begin{equation}\label{eq_bkn}
	\square^F=\overline{\square}^F+[\sqrt{-1}R^F,\Lambda]+[(\nabla^F)^{1,0},\mT^*]+[(\nabla^F)^{0,1},\overline{\mT}^*].
	\end{equation}
\end{theorem}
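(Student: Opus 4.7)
The plan is to generalize the classical Kähler identities to the Hermitian (non-Kähler) setting, where the failure of $d\omega=0$ forces the appearance of torsion operators built from $\pr\omega$ and $\ddbar\omega$, and then to manipulate the graded commutator defining $\square^F$.

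First I would split the Chern connection as $\nabla^F = (\nabla^F)^{1,0} + \ddbar^F$ and work pointwise. At a chosen $x_0 \in X$, pick local coordinates and a holomorphic frame of $F$ trivializing $\omega$ and $h^F$ to second order up to the unavoidable obstructions encoded in $d\omega$ and $R^F$. In this local model, compute the formal adjoints $\ddbar^{F*}$ and $((\nabla^F)^{1,0})^*$ via the usual pointwise (flat) Kähler identities and collect the correction terms arising from $\pr\omega$ and $\ddbar\omega$ into the operators $\mT^*$ and $\overline{\mT}^*$. The outcome is the pair of Hermitian Kähler identities
\begin{equation}
[\Lambda,\ddbar^F] = -\sqrt{-1}\bigl(((\nabla^F)^{1,0})^* + \overline{\mT}^*\bigr),\qquad [\Lambda,(\nabla^F)^{1,0}] = \sqrt{-1}\bigl(\ddbar^{F*} + \mT^*\bigr),
\end{equation}
which reduce to the standard ones when $\omega$ is Kähler.

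Next I would solve the second identity for $\ddbar^{F*} = -\sqrt{-1}[\Lambda,(\nabla^F)^{1,0}] - \mT^*$, insert into $\square^F = [\ddbar^F,\ddbar^{F*}]$ (graded commutator), and apply the graded Jacobi identity
\begin{equation}
\bigl[\ddbar^F,[\Lambda,(\nabla^F)^{1,0}]\bigr] = \bigl[[\ddbar^F,\Lambda],(\nabla^F)^{1,0}\bigr] + \bigl[\Lambda,[\ddbar^F,(\nabla^F)^{1,0}]\bigr].
\end{equation}
Using the first Hermitian Kähler identity to replace $[\ddbar^F,\Lambda]$ produces the conjugate Laplacian $\overline{\square}^F = [(\nabla^F)^{1,0},((\nabla^F)^{1,0})^*]$ together with a residual $\overline{\mT}^*$-term. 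The remaining inner commutator $[\ddbar^F,(\nabla^F)^{1,0}]$ is, on sections of $F$, exactly the $(1,1)$-component of the square of the Chern connection, i.e.\ the curvature $R^F$, which yields the Nakano term $[\sqrt{-1}R^F,\Lambda]$ after the outer bracket with $\Lambda$.

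Assembling everything, the leftover pieces $[\ddbar^F,\mT^*]$ and the $\overline{\mT}^*$-contribution regroup into the torsion commutators $[(\nabla^F)^{1,0},\mT^*]$ and $[(\nabla^F)^{0,1},\overline{\mT}^*]$ as stated. The main obstacle is strictly bookkeeping: tracking signs in the graded commutators (since $\ddbar^F$, $(\nabla^F)^{1,0}$ and $\mT$ are odd while $\Lambda$ and $R^F$ are even), and verifying that the torsion corrections in the Hermitian Kähler identities are precisely $\mT^*$ and $\overline{\mT}^*$ rather than other combinations of $\pr\omega$ and $\ddbar\omega$. Once these are pinned down, the derivation collapses to one application of the graded Jacobi identity.
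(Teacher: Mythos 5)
The paper offers no proof of this identity: it is quoted from Demailly with a pointer to \cite[Theorem 1.4.12]{MM07}, and the route you propose --- establish torsion-corrected Hermitian commutation relations, then expand $\square^F=[\ddbar^F,\ddbar^{F*}]$ with the graded Jacobi identity --- is precisely the one used there. So your architecture is the standard (and correct) one. However, the commutation relations you write down are wrong as stated: the two torsion corrections are interchanged. A bidegree count shows this at once. Since $\mT=[\Lambda,\dbar\omega]$ (with $\dbar\omega$ of type $(2,1)$ and $\Lambda$ of bidegree $(-1,-1)$), $\mT$ has bidegree $(1,0)$ and $\mT^*$ has bidegree $(-1,0)$, matching $[\Lambda,\ddbar^F]$ and $((\nabla^F)^{1,0})^*$; while $\overline{\mT}^*$ has bidegree $(0,-1)$. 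Hence the correct relations are $[\Lambda,\ddbar^F]=-\sqrt{-1}\bigl(((\nabla^F)^{1,0})^*+\mT^*\bigr)$ and $[\Lambda,(\nabla^F)^{1,0}]=\sqrt{-1}\bigl(\ddbar^{F*}+\overline{\mT}^*\bigr)$, the opposite of what you wrote. This is not harmless bookkeeping: feeding your versions into the Jacobi computation produces terms such as $[(\nabla^F)^{1,0},\overline{\mT}^*]$ of bidegree $(1,-1)$, which cannot appear in an identity for the bidegree-$(0,0)$ operator $\square^F$, so the assembly step fails. You should also track the residual term: substituting $\ddbar^{F*}=-\sqrt{-1}[\Lambda,(\nabla^F)^{1,0}]-\overline{\mT}^*$ yields $-[\ddbar^F,\overline{\mT}^*]$, and matching this against the sign in \eqref{eq_bkn} requires fixing the sign convention for $\overline{\mT}$.

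The larger gap is that essentially all of the content of the theorem lives in the commutation relations themselves, and your proposal only gestures at their proof (``trivialize to second order and collect correction terms''). In the non-K\"ahler case one cannot osculate $\omega$ by the flat metric to second order --- that is exactly the K\"ahler condition --- so the corrections do not drop out of a naive pointwise reduction; one must carry out the local computation with the operator $\dbar\omega\wedge\cdot$ and its contractions as in \cite[Theorem 1.4.11]{MM07} (or Demailly's original argument) to identify the corrections as precisely $\mT^*$ and $\overline{\mT}^*$. The remaining steps of your plan are sound: the graded Jacobi identity is applied correctly, and $[\ddbar^F,(\nabla^F)^{1,0}]=(\nabla^F)^2=R^F$ does give the Nakano term. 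But until the commutation relations are proved with the torsion operators in the right places, the derivation is conditional on an input that, as you have written it, is false.
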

Also the Bochner-Kodaira-Nakano formula with boundary term due to Andreotti-Vesentini-Griffiths is vital to our approach. The following inequality is a geometric version of the Morrey-Kohn-H\"{o}rmander estimate, which is important in the solution of the $\ddbar$-Neumann problem.

\begin{theorem}[{\cite[Corollary 1.4.22]{MM07}}]\label{BKNwithbd}
	Let $M$ be a smooth relatively compact domain in a Hermitian manifold $(X,\omega)$. Let $\rho\in \cC^\infty(X)$ such that $M=\{x\in X: \rho(x)< 0\}$ and $|d\rho|=1$ on the boundary $bM$. Let $(F,h^F)$ be a holomorphic Hermitian vector bundles on $X$. Then
	for any $s\in B^{0,p}(M,F)$, $0\leq p\leq n$,
	\begin{equation}
	\begin{split}
	\frac{3}{2}\left(||\ddbar^F s||^2+||\ddbar^{F*}s||^2\right)
	&\geq \frac{1}{2}||(\nabla^{\til{F}})^{1,0*}\til{s}||^2+\left\langle  R^{F\otimes K^*_X}(w_j,\ov{w}_k)\ov{w}^k\wedge i_{\ov{w}_j} s,s\right\rangle\\
	&+\int_{bM}\cL_{\rho}(s,s)dv_{bM}-\frac{1}{2}\left(||\mT^*\til{s}||^2+||\ov{\mT}\til{s}||^2+||\ov{\mT}^* \til{s}||^2\right),
	\end{split}
	\end{equation}
	where $\{ w_j \}_{j=1}^n$ is a local orthonormal frame of $T^{(1,0)}X$ with dual frame $\{ w^j\}_{j=1}^n$ of $T^{(1,0)*}X$,  $\cL_{\rho}(\cdot,\cdot):=(\dbar\ddbar \rho)(w_k,\ov w_j)\langle \ov w^j\wedge i_{\ov w_k} \cdot,\cdot\rangle_h$ is Levi form of $bM$, $\mT:=[i_\omega,\dbar\omega]$ is the Hermitian torsion operator and $\mT^*$ is its formal adjoint, $\wi{F}:=F\otimes K^*_X$, $\nabla^{\wi F}$ is Chern connection and $\wi s:=(w^1\wedge\cdots\wedge w^n\wedge s)\otimes (w_1\wedge\cdots\wedge w_n)$.
\end{theorem}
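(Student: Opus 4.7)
The plan is to derive this boundary estimate from Demailly's Bochner-Kodaira-Nakano identity \eqref{eq_bkn} by passing to $\til{F}$-valued $(n,p)$-forms and carefully tracking the boundary contributions produced when integrating by parts on $M$. The starting point is that $s\mapsto\til s$ is an isometric isomorphism $\Omega^{0,p}(M,F)\to \Omega^{n,p}(M,\til F)$ with $\til F=F\otimes K^*_X$, under which the $\ddbar$-Neumann boundary condition defining $B^{0,p}(M,F)$ corresponds to $i_{\ov\pr\rho}\til s=0$ on $bM$. On $(n,p)$-forms the commutator $[\sqrt{-1}R^{\til F},\Lambda]$ simplifies to the operator $R^{\til F}(w_j,\ov w_k)\ov w^k\wedge i_{\ov w_j}$ acting on the $(0,p)$-part, producing exactly the curvature term of the stated inequality.

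First, I would apply \eqref{eq_bkn} to $\til s$ and pair with $\til s$ over $M$. The left-hand side becomes $\norm{\ddbar^{\til F}\til s}^2+\norm{\ddbar^{\til F*}\til s}^2$ with no extra boundary contribution, since $\til s$ lies in the $\ddbar$-Neumann domain. The conjugate-Laplacian pairing $\langle\ov\square^{\til F}\til s,\til s\rangle$, after integration by parts, yields the nonnegative term $\norm{(\nabla^{\til F})^{1,0*}\til s}^2$ plus a boundary integral; using $|d\rho|=1$ and $i_{\ov\pr\rho}\til s=0$ on $bM$, this boundary integral reduces precisely to $\int_{bM}\cL_\rho(s,s)\,dv_{bM}$. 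I would then compare $\norm{\ddbar^{\til F}\til s}^2+\norm{\ddbar^{\til F*}\til s}^2$ with $\norm{\ddbar^F s}^2+\norm{\ddbar^{F*}s}^2$: the discrepancy is controlled by the Hermitian torsion operators applied to $\til s$, which also combine with the two torsion commutators $[(\nabla^{\til F})^{1,0},\mT^*]$ and $[(\nabla^{\til F})^{0,1},\ov{\mT}^*]$ appearing in \eqref{eq_bkn}.

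To produce the final coefficients, I would apply the elementary inequality $|\langle X,Y\rangle|\leq \tfrac12\norm X^2+\tfrac12\norm Y^2$ to absorb the cross terms between $\ddbar^F s, \ddbar^{F*}s$ and the torsion pieces $\mT^*\til s, \ov\mT\til s, \ov\mT^*\til s$; this step generates the factor $\tfrac32$ on the left, the coefficient $\tfrac12$ in front of $\norm{(\nabla^{\til F})^{1,0*}\til s}^2$, and the negative torsion terms $-\tfrac12(\norm{\mT^*\til s}^2+\norm{\ov\mT\til s}^2+\norm{\ov\mT^*\til s}^2)$ on the right. The main obstacle is the explicit identification of the boundary integral with $\int_{bM}\cL_\rho(s,s)\,dv_{bM}$: this requires expanding $\ddbar^{\til F*}$ as a formal adjoint on $M$ and observing that the $\ddbar$-Neumann condition cancels the interior contributions, leaving precisely the Levi-form piece $(\ddbar\dbar\rho)(w_k,\ov w_j)\langle\ov w^j\wedge i_{\ov w_k}s,s\rangle_h$ on $bM$; the torsion bookkeeping, while notationally heavy, reduces to direct commutator computations within the local frame $\{w_j\}$.
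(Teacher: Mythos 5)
The paper does not actually prove this statement---it is quoted directly from \cite[Corollary 1.4.22]{MM07}---and your sketch is essentially the derivation given in that source: pass to $(n,p)$-forms via the isometry $s\mapsto\til s$ with $\til F=F\otimes K_X^*$, apply Demailly's identity \eqref{eq_bkn}, integrate by parts over $M$ using the $\ddbar$-Neumann condition to extract the Levi-form boundary term (the identity version being \cite[Theorem 1.4.21]{MM07}), and then absorb the torsion cross terms by $|\langle X,Y\rangle|\leq\tfrac12\|X\|^2+\tfrac12\|Y\|^2$ to obtain the coefficients $\tfrac32$, $\tfrac12$, $-\tfrac12$. The one point to correct in execution is your claim that the pairing of the Laplacian with $\til s$ produces $\|\ddbar^{\til F}\til s\|^2+\|\ddbar^{\til F*}\til s\|^2$ with \emph{no} boundary contribution: the condition $i_{e_{\bn}^{(0,1)}}\til s=0$ on $bM$ kills only the boundary term arising from $\ddbar\,\ddbar^{*}$, while the term from $\ddbar^{*}\ddbar$, namely $\int_{bM}\langle\til s,\,i_{e_{\bn}^{(0,1)}}\ddbar\,\til s\rangle\,dv_{bM}$, survives and must be handled by Morrey's trick (differentiating the boundary condition tangentially along $bM$); it is precisely this surviving term, combined with the boundary term from the $\ov\square$ side, that yields $\int_{bM}\cL_\rho(s,s)\,dv_{bM}$.
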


\subsection{Asymptotic Bergman kernel function of line bundles}
Berman proved a local version of weak holomorphic Morse inequalities, which holds regardless of compactness or completeness. Refer to \cite[Theorem 1.1, Remark 1.3]{Be04} and \cite[Corollary 1.4]{HM:14} for details.
Let $(X,\omega)$ be a Hermitian manifold of dimension $n$. Let $(L,h^L)$ and $(E,h^E)$ be holomorphic Hermitian line bundles on $X$.
Let $\{{s}^k_j\}_{j\geq 1}$ be an orthonormal basis of $\cH^{0,q}({X},{L}^k\otimes E)$, $0\leq q\leq n$, and $|\cdot|:=|\cdot|_{{h}_k,{\omega}}$ the point-wise Hermitian norm. The Bergman kernel function on $X$ is defined by
\begin{equation}
{B}^{q}_k({x})=\sum_{j}|s^k_j({x})|,
\;{x}\in {X}.
\end{equation}

\begin{theorem}[{\cite[Theorem 1.1]{Be04}}{\cite[Corollary 1.4]{HM:14}}]\label{thm_lwhmi_b}
	For any $x\in X$, we have
	\begin{equation}\label{eq_berman_mz}
	\limsup_{k\rightarrow \infty} k^{-n}B^q_k(x)\leq (-1)^q 1_{{X}(q)}\frac{c_1({L},h^{{L}})^n}{{\omega}^n}({x}).
	\end{equation}
\end{theorem}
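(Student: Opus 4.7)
The plan is to combine the standard extremal characterization of the Bergman kernel with a rescaling that reduces the problem to a model computation on $\C^n$. Since $\{s^k_j\}$ is an orthonormal basis of $\cH^{0,q}(X,L^k\otimes E)$, the reproducing property gives
\begin{equation*}
B^q_k(x)=\sup\bigl\{|s(x)|^2\,:\,s\in \cH^{0,q}(X,L^k\otimes E),\ \|s\|_{L^2}\le 1\bigr\},
\end{equation*}
so it suffices to show that for every fixed $x_0\in X$ and every sequence of $L^2$-normalized harmonic forms $s_k\in\cH^{0,q}(X,L^k\otimes E)$,
\begin{equation*}
\limsup_{k\to\infty} k^{-n}|s_k(x_0)|^2\le(-1)^q 1_{X(q)}(x_0)\,\frac{c_1(L,h^L)^n}{\omega^n}(x_0).
\end{equation*}

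Next I localize and rescale. Choose local holomorphic coordinates $z$ centered at $x_0$ and unitary trivializations of $L$ and $E$ so that $h^L=e^{-\phi}$ with $\phi(0)=0$, $d\phi(0)=0$, and $\ddbar\dbar\phi(0)$ represents $R^L(x_0)$. Introduce the rescaled variable $w=\sqrt{k}\,z$ on a ball of fixed radius in $\C^n$. Then $k\phi(w/\sqrt{k})\to \phi_0(w)$, the quadratic polynomial whose Levi form equals $R^L(x_0)$, while the pulled-back Hermitian metric converges to the flat Euclidean metric to $O(k^{-1/2})$. The appropriately rescaled form $\wi s_k$ (pulling back $s_k$ and inserting a $k^{-n/2}$ to absorb the volume Jacobian) is $\ddbar$-harmonic for the rescaled Kodaira Laplacian, has $L^2$-norm bounded by $1$ with respect to the rescaled weighted measure $e^{-k\phi(w/\sqrt{k})}\,dv$, and satisfies $k^{-n}|s_k(x_0)|^2=|\wi s_k(0)|^2$.

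I then pass to the limit. Interior elliptic regularity for the rescaled Kodaira Laplacian, whose principal symbol converges to the Euclidean $\ddbar$-Laplacian, together with a weighted sub-mean-value inequality for $\ddbar$-harmonic forms (a consequence of the Bochner--Kodaira--Nakano identity \eqref{eq_bkn}), provides uniform $C^m_{\mathrm{loc}}$ bounds on $\wi s_k$. A diagonal extraction gives $\wi s_k\to \wi s_\infty$ in $C^\infty_{\mathrm{loc}}$, with $\wi s_\infty$ an $L^2$-harmonic $(0,q)$-form on $(\C^n,e^{-\phi_0})$ of norm at most $1$, and hence $k^{-n}|s_k(x_0)|^2\to |\wi s_\infty(0)|^2\le B^q_\infty(0)$. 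A direct calculation on the model, diagonalizing $\phi_0$ and splitting into $n$ one-dimensional Bargmann--Fock factors, shows that $B^q_\infty(0)$ equals $(-1)^q c_1(L,h^L)^n/\omega^n(x_0)$ precisely when $R^L(x_0)$ is non-degenerate of index $q$, and vanishes otherwise.

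The main obstacle is ensuring that the sub-mean-value inequality is uniform in $k$ after rescaling, so that the $L^2$-bound $\|s_k\|\le 1$ on the (possibly non-compact or incomplete) manifold $X$ genuinely controls the pointwise value $|\wi s_k(0)|^2$ without mass escaping to infinity in the $w$-picture. Berman handles this via an $L^\infty$-estimate for extremal sections obtained from a weighted Caccioppoli inequality, while Hsiao--Marinescu derive it from off-diagonal decay of the spectral kernel $E_{[0,1]}$ of $\square^{L^k\otimes E}$. In either route only local data of $(L,h^L)$ near $x_0$ enter, which is why no global completeness or compactness hypothesis on $X$ is required, and is what makes Theorem \ref{thm_lwhmi_b} the clean local ingredient fed into Proposition \ref{prop_main}.
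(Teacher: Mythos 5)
The paper does not actually prove Theorem \ref{thm_lwhmi_b}: it is imported as a black box from Berman \cite{Be04} and Hsiao--Marinescu \cite{HM:14}, so there is no internal proof to compare against. Your sketch reproduces Berman's route (localize, rescale by $\sqrt{k}$, extract a $C^\infty_{\mathrm{loc}}$ limit via a uniform local sub-mean-value/elliptic estimate, compute the model Bargmann--Fock kernel), and in outline this is the correct argument and the correct reason why no global hypothesis on $X$ is needed.

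There is, however, one concrete misstep. For $q\geq 1$ the identity $B^q_k(x)=\sup\{|s(x)|^2:\|s\|\leq 1\}$ is false: since $s(x)$ lives in the $\binom{n}{q}$-dimensional fibre $\Lambda^{0,q}T^*_xX\otimes L^k_x\otimes E_x$, that supremum equals the operator norm of the kernel $K_k(x,x)$ as an endomorphism of the fibre, whereas $B^q_k(x)$ is its trace; by Cauchy--Schwarz one only gets $\sup\{|s(x)|^2:\|s\|\le 1\}\le B^q_k(x)\le \binom{n}{q}\sup\{|s(x)|^2:\|s\|\le 1\}$. As written, your reduction therefore bounds the wrong quantity (or, if you insert the factor $\binom{n}{q}$, loses the sharp constant that the theorem and Proposition \ref{prop_main} require). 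The standard fix, which is what Berman does, is the directional extremal characterization: for each unit fibre vector $v_\alpha$ one has $\sum_j|\langle s_j^k(x),v_\alpha\rangle|^2=\sup\{|\langle s(x),v_\alpha\rangle|^2:\|s\|\le 1\}$, and summing over an orthonormal fibre basis recovers $B^q_k(x)$ exactly; each directional extremal problem then rescales and converges to the corresponding directional value of the model kernel, whose trace is $(-1)^q 1_{X(q)}c_1(L,h^L)^n/\omega^n(x)$. Two smaller points: the uniform pointwise control of $\wi s_k(0)$ comes from interior G\aa rding/elliptic estimates for the rescaled Laplacian rather than from the Bochner--Kodaira--Nakano identity (which is instead what shows the model harmonic space vanishes when the index of $R^L(x_0)$ differs from $q$); and the degenerate case, where $R^L(x_0)$ has a zero eigenvalue but $x_0\notin X(q)$, is not covered by ``vanishes otherwise'' and needs the usual perturbation or monotonicity-in-the-weight argument.
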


\begin{theorem}[{\cite[Theorem 4.3]{HM:14} \cite[Corollary 3.3]{Be04}}]\label{thm_lub_b} If $K\subset X$ be a compact subset, then there exist $C>0$ and  $k_0\in\N$, such that for any $x\in K$ and $k>k_0$,
	\begin{equation}
	k^{-n}B^q_k(x)\leq  C.
	\end{equation}
\end{theorem}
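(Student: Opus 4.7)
The plan is to reduce the pointwise bound on the Bergman kernel function to a uniform submean-value inequality for harmonic forms, and then to use the $\sqrt{k}$-rescaling that is standard in the asymptotic analysis of Bergman kernels. First I would establish the variational (extremal) characterization
\begin{equation*}
B^q_k(x_0)=\max\bigl\{|s(x_0)|^2_{h_k,\omega}\,:\,s\in\cH^{0,q}(X,L^k\otimes E),\ \|s\|_{L^2}\le 1\bigr\},
\end{equation*}
which is an elementary consequence of Cauchy--Schwarz applied to any orthonormal basis $\{s^k_j\}$. Once we have this, it suffices to show that there exist constants $r_0,C>0$ and $k_0\in\mathbb N$, depending only on $K$ and on the geometry of $(X,\omega,L,E)$, such that for every $x_0\in K$, every $k>k_0$ and every harmonic form $s\in\cH^{0,q}(X,L^k\otimes E)$,
\begin{equation*}
|s(x_0)|^2_{h_k,\omega}\le C\,k^n\!\int_{B(x_0,r_0/\sqrt{k})}|s|^2_{h_k,\omega}\,dv_X.
\end{equation*}
Combining with the extremal property immediately yields $k^{-n}B^q_k(x_0)\le C\|s\|^2_{L^2}\le C$, as desired.

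For the submean-value inequality I would work in a fixed-size trivializing chart around each $x_0\in K$. Trivialize $L$ with a local holomorphic frame $e_L$ such that $|e_L|^2_{h^L}=e^{-\phi}$ with $\phi(x_0)=0$ and $d\phi(x_0)=0$, and trivialize $E$ similarly. Harmonic forms satisfy $\square^{L^k\otimes E}s=0$; by Demailly's Bochner--Kodaira--Nakano formula \eqref{eq_bkn}, the full Chern Laplacian $\overline{\square}^{L^k\otimes E}s$ is then controlled in $L^2$ by zero-order curvature terms acting on $s$, so that the Chern Laplacian of $s$ itself is bounded by $Ck\,\|s\|^2$ on a small fixed neighbourhood of $x_0$. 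Apply the rescaling $z\mapsto Z=\sqrt{k}\,z$: the weight $k\phi(z)=\phi(Z/\sqrt{k})k$ converges on compact subsets of $\mathbb C^n$ to the quadratic model $\langle R^L_{x_0}Z,\overline Z\rangle$, and the rescaled Kodaira Laplacian has coefficients bounded uniformly in both $x_0\in K$ and $k\ge k_0$ on the unit $Z$-ball. Hence a standard interior elliptic $L^\infty$-to-$L^2$ estimate (Moser iteration, or Sobolev embedding applied iteratively to the rescaled elliptic equation) yields, on the unit $Z$-ball, a bound $|s|^2(x_0)\le C\int_{|Z|\le 1}|s|^2\,dV_Z$; translating back to the original coordinates produces exactly the $k^n$-factor coming from the Jacobian $dV_Z=k^n\,dv_X$.

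The main technical obstacle is to ensure that the constants in the submean-value step are uniform in $x_0\in K$ and $k$ with the sharp power $k^n$; a naive application of elliptic regularity to $\square^{L^k\otimes E}$ would produce constants degenerating like powers of $k$. The rescaling $Z=\sqrt{k}\,z$ removes this degeneration because after rescaling, the symbol of the Laplacian is uniformly elliptic on the unit ball, and the weight has derivatives of order $1$ controlled by $\|R^L\|_{C^1(K)}/\sqrt{k}$, so that the relevant Sobolev norms are comparable with constants independent of $k$. One must also check that the ball $B(x_0,r_0/\sqrt{k})$ stays inside the trivializing chart for all $x_0\in K$ uniformly; this is automatic for $k$ large since $K$ is compact and can be covered by finitely many charts.

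Putting the three steps together, we obtain constants $C>0$ and $k_0$ such that $k^{-n}B^q_k(x_0)\le C$ for every $x_0\in K$ and every $k>k_0$, completing the proof.
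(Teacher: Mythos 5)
The paper does not prove this statement at all --- it is quoted verbatim from \cite[Theorem 4.3]{HM:14} and \cite[Corollary 3.3]{Be04} --- so the only meaningful comparison is with those sources, and your outline is essentially their argument: extremal characterization, plus a submean-value inequality on balls of radius $r_0/\sqrt{k}$ with the sharp $k^n$ Jacobian factor, made uniform over $K$ by compactness. The outline is sound, with two small points to repair. First, for $q\geq 1$ the fibre of $\Lambda^{0,q}T^*X\otimes L^k\otimes E$ is not one-dimensional, so the extremal characterization you state as an equality only holds as a two-sided comparison: decomposing $s(x_0)$ in an orthonormal basis $\{e_I\}$ of the fibre and applying the reproducing property to each functional $s\mapsto\langle s(x_0),e_I\rangle$ gives
\begin{equation*}
\sup_{\|s\|\leq 1}|s(x_0)|^2\ \leq\ B^q_k(x_0)\ \leq\ \binom{n}{q}\,\sup_{\|s\|\leq 1}|s(x_0)|^2,
\end{equation*}
which is all you need since the dimensional constant is absorbed into $C$. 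Second, the commutator terms $[(\nabla^F)^{1,0},\mT^*]$ and $[(\nabla^F)^{0,1},\overline{\mT}^*]$ in Demailly's formula \eqref{eq_bkn} are first order, not zero order, so the passage from $\square^{L^k\otimes E}s=0$ to control of $\overline{\square}^{L^k\otimes E}s$ requires absorbing first-order torsion contributions (by Cauchy--Schwarz into the gradient term); alternatively, Berman's route avoids this entirely by deriving from the Bochner--Lichnerowicz identity the scalar differential inequality $\Delta|s|^2\geq -Ck|s|^2$ and applying the submean-value property for such functions on balls of radius $\sim 1/\sqrt{k}$. Either way the conclusion and the uniformity in $x_0\in K$ and $k$ go through as you describe.
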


	The local estimate of Bergman function (\ref{eq_berman_mz}) was inspired by \cite{BB:02}, in which the refined estimate $k^{-n+q}B^q_k(x)\leq C$ holds for semipositive line bundles on compact complex manifolds. Furthermore, it still holds on arbitrary manifolds, when restricting Bergman kernel function on compact subsets, see \cite{Wh:16,Wh:21a,Wh:21b} for parallel results of Proposition \ref{prop_main} and Theorem \ref{thm_complete}--\ref{thm_cover}.

\section{$L^2$ weak Morse inequalities}\label{Sec_l2wmi}
The point of passing from the compact manifold to the non-compact is, under appropriate assumption on the positivity of line bundle $L$ or the convexity of manifold $X$, the norm of harmonic forms with values in $L^k\otimes E$ decay to zero as $k\rightarrow\infty$ outside of a compact subset. As a consequence, the computation of the dimension of cohomology concentrates on a compact subset.

\begin{proposition}\label{cor_l2_morse}
	Let $(X,\omega)$ be a Hermitian manifold of dimension $n$ and let $(L,h^L)$ and $(E,h^E)$ be holomorphic Hermitian line bundles on $X$.
	Suppose there exist a sequence of real numbers $a_k\in\R$ with $k\in\N$, and a compact subset $K\subset X$ such that:
		(1) $\limsup_{k\rightarrow \infty}a_k=1$;
		(2) For sufficiently large $k$, $\| s\|^2\leq a_k \int_K |s|^2 dv_X$ for each $s\in\Ker(\ddbar_k^{ E})\cap\Ker(\ddbar^{E*}_{k})\cap L^2_{0,q}(X,L^k\otimes E)$ with $0\leq q\leq n$.
	Then,  we have
	\begin{equation}
	\limsup_{k\rightarrow \infty}n!k^{-n}\dim \cH^{0,q}({X},{L}^k\otimes E)\leq \int_{K(q)}(-1)^q c_1(L,h^L)^n.
	\end{equation}
	In particular, Proposition \ref{prop_main} holds true.
\end{proposition}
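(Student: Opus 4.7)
The plan is to bound $N_k:=\dim\cH^{0,q}(X,L^k\otimes E)$ from above by an integral of the Bergman kernel function $B^q_k$ over the compact set $K$, and then feed that integral to the pointwise asymptotic Theorem~\ref{thm_lwhmi_b} together with the uniform bound Theorem~\ref{thm_lub_b}. The main insight is that for harmonic forms the two-sided fundamental-type hypothesis collapses directly into an $L^1$-control of $B^q_k$ on a fixed compact set.

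First I would choose an orthonormal basis $\{s^k_j\}_{j=1}^{N_k}$ of $\cH^{0,q}(X,L^k\otimes E)$. Each $s^k_j$ lies in $\Ker(\ddbar^E_k)\cap\Ker(\ddbar^{E*}_k)$ with $\|s^k_j\|^2=1$, so applying hypothesis~(2) to $s^k_j$ and summing over $j$ via the definition of the Bergman kernel function gives
\begin{equation*}
N_k=\sum_{j=1}^{N_k}\|s^k_j\|^2\leq a_k\int_K\sum_{j=1}^{N_k}|s^k_j(x)|^2\,dv_X(x)=a_k\int_K B^q_k(x)\,dv_X(x).
\end{equation*}
Theorem~\ref{thm_lub_b} forces this integral to be finite, so $N_k<\infty$ is automatic. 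Next I would multiply by $n!k^{-n}$ and pass to the limit: fixing $\epsilon>0$, hypothesis~(1) yields $a_k\leq 1+\epsilon$ for $k$ large, and the uniform domination $k^{-n}B^q_k\leq C$ on $K$ justifies reverse Fatou. Combining with the pointwise bound of Theorem~\ref{thm_lwhmi_b} and the identity $n!\,dv_X=\omega^n$,
\begin{equation*}
\limsup_{k\to\infty}n!k^{-n}N_k\leq(1+\epsilon)\int_K\limsup_{k\to\infty}n!k^{-n}B^q_k(x)\,dv_X(x)\leq(1+\epsilon)\int_{K(q)}(-1)^q c_1(L,h^L)^n,
\end{equation*}
and letting $\epsilon\to 0$ yields the main inequality.

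For the deduction of Proposition~\ref{prop_main}, apply \eqref{eq_ofe} to any harmonic form $s\in\cH^{0,q}(X,L^k\otimes E)$: since $\ddbar^E_k s=0$ and $\ddbar^{E*}_k s=0$, the estimate collapses to $(1-C_0/k)\|s\|^2\leq\int_K|s|^2\,dv_X$. Hence hypothesis~(2) holds with $a_k=(1-C_0/k)^{-1}$, and hypothesis~(1) follows from $a_k\to 1$. The conclusion transfers from $\cH^{0,q}$ to $H^q_{(2)}(X,L^k\otimes E)$ via \eqref{eq47} (the OFE implies the ordinary fundamental estimate for sufficiently large $k$, so reduced and non-reduced $L^2$-cohomology coincide). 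The only genuinely analytic step in the whole argument is the interchange of $\limsup$ with the integral, which is precisely the reason the companion uniform bound Theorem~\ref{thm_lub_b} is indispensable beyond the pointwise asymptotic Theorem~\ref{thm_lwhmi_b}.
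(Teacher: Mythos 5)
Your proposal is correct and follows essentially the same route as the paper: bound $\dim\cH^{0,q}$ by $a_k\int_K B^q_k\,dv_X$ via an orthonormal basis, interchange $\limsup$ and integral using the uniform bound of Theorem~\ref{thm_lub_b} (the paper phrases this as Fatou applied to $C-k^{-n}B^q_k$, which is your reverse Fatou), then invoke the pointwise asymptotic of Theorem~\ref{thm_lwhmi_b}; the deduction of Proposition~\ref{prop_main} with $a_k=(1-C_0/k)^{-1}$ and the isomorphism $H^q_{(2)}\cong\cH^{0,q}$ is also identical. The only difference is cosmetic: you use $a_k\leq 1+\epsilon$ where the paper uses $\limsup(a_k b_k)\leq(\limsup a_k)(\limsup b_k)$.
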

\begin{proof}
	  By Theorem \ref{thm_lub_b} and applying Fatou's lemma to the sequence of non-negative functions $C-k^{-n}B^q_k$ on $K$, as well as the finiteness of the volume $\int_K C dv_X<\infty$, we have
	\begin{equation} \label{eq_fatou}
	\limsup_{k\rightarrow \infty} \int_K k^{-n} {B}^{q}_k({x}) dv_{{X}}({x})
	\leq \int_K \limsup_{k\rightarrow \infty}k^{-n}{B}^{q}_k({x})dv_{{X}}({x}).
	\end{equation}
	From the assumptions (1), (2) and (\ref{eq_berman_mz}) (\ref{eq_fatou}), it follows that
	\begin{equation}
	\begin{split}
	&\limsup_{k\rightarrow \infty}\left(k^{-n}\dim\cH^{0,q}(X,L^k\otimes E)\right)\leq\limsup_{k\rightarrow \infty}\left(k^{-n}a_k \int_{K}B_k^q(x)dv_X(x)\right)\\
	&\leq\left(\limsup_{k\rightarrow \infty} a_k\right)\left(\limsup_{k\rightarrow \infty} \int_{K}k^{-n}B_k^q(x)dv_X(x)\right)\leq\int_{K}\limsup_{k\rightarrow \infty}k^{-n} B_k^q(x)dv_X(x)\\
	&\leq \int_{K(q)}(-1)^q \frac{c_1({L},h^{{L}})^n}{n!}.
	\end{split}
	\end{equation}
In particular, Proposition \ref{prop_main} holds true from \eqref{eq_ofe} and $H_{(2)}^{q}({X},{L}^k\otimes E)\cong \cH^{0,q}({X},{L}^k\otimes E)$.
\end{proof}

\subsection{$q$-convex manifolds}

	A complex manifold $X$ of dimension $n$ is called $q$-convex if there exists a smooth function $\varrho\in \cC^\infty(X,\R)$ such that the sublevel set $X_c=\{ \varrho<c\}\Subset X$ for all $c\in \R$ and the complex Hessian $\dbar\ddbar\varrho$ has $n-q+1$ positive eigenvalues outside a compact subset $K\subset X$. Here $X_c\Subset X$ means that the closure $\overline{X}_c$ is compact in $X$. We call $\varrho$ an exhaustion function and $K$ exceptional set.

Let $M$ be a relatively compact domain in $X$. Let $\rho$ be a defining function of $M$ satisfying $M=\{ x\in X: \rho(x)<0 \}$ and $|d\rho|=1$ on $bM$, where the pointwise norm $|\cdot|$ is given by $g^{TX}$ associated to $\omega$. Let $e_{\bn} \in TX$ be the inward pointing unit normal at $bM$ and $e_{\bn}^{(0,1)}$ its projection on $T^{(0,1)}X$. In a local orthonormal frame $\{ w_1,\cdots,\omega_n \}$ of $T^{(1,0)}X$, we have $e_{\bn}^{(0,1)}=-\sum_{j=1}^n w_j(\rho)\ov w_j$. Let $B^{0,q}(X,F):=\{ s\in \Omega^{0,q}(\ov M, F): i_{e_{\bn}^{(0,1)}}  s=0 ~\mbox{on}~bM \}$. We have $B^{0,q}(M,F)=\Dom(\ddbar_H^{F*})\cap \Omega^{0,q}(\overline{M},F)$ and the Hilbert space adjoint $\ddbar_H^{F*}$ of $\ddbar^F$ coincides with the formal adjoint $\ddbar^{F*}$ of $\ddbar^F$ on $B^{0,q}(M,F)$, see \cite[Proposition 1.4.19]{MM07}. We consider the operator $\square_N s=\ddbar^{F}\ddbar^{F*}s+\ddbar^{F*}\ddbar^{F}s$ for $s\in \Dom(\square_N):=\{s\in B^{0,q}(M,F): \ddbar^Fs\in B^{0,q+1}(M,F) \}$. The Friedrichs extension of $\square_N$ is a self-adjoint operator and is called the Kodaira Laplacian with $\ddbar$-Neumann boundary conditions, which coincides with the Gaffney extension of the Kodaira Laplacian, see \cite[Proposition 3.5.2]{MM07}. Note $\Omega^{0,\bullet}(\overline{M},F)$ is dense in $\Dom(\ddbar^F)$ in the graph-norms of $\ddbar^F$, and $B^{0,\bullet}(M,F)$ is dense in $\Dom(\ddbar^{F*}_H)$ and in $\Dom(\ddbar^F)\cap \Dom(\ddbar^{F*}_H)$ in the graph-norms of $\ddbar^{F*}_H$ and $\ddbar^F+\ddbar^{F*}_H$, respectively, see \cite[Lemma 3.5.1]{MM07}. Here the graph-norm is defined by $\|s\|+\|Rs\|$ for $s\in \Dom(R)$.

From now on let $X$ be a $q$-convex manifold of dimension $n$. Let $u_0<u<c<v$ such that the exceptional subset $K\subset X_{u_0}:=\{x\in X: \varrho(x)<{u_0} \}$.

Firstly, we choose now a metric on $X$ due to \cite[Lemma 3.5.3]{MM07} as follows.

\begin{lemma} \label{lowbd_rho_lem}
	For any $C_1>0$ there exists a metric $g^{TX}$ (with Hermitian form $\omega$) on $X$ such that for any $j\geq q$ and any holomorphic Hermitian vector bundle $(F,h^F)$ on $X$,
	\begin{equation}
	\langle (\dbar\ddbar\varrho)(w_l,\ol{w}_k)\ol{w}^k\wedge i_{\ol{w}_l}s,s \rangle_h\geq C_1|s|^2, \quad s\in \Omega^{0,j}_0(X_v\setminus \overline{X}_u,F),
	\end{equation}
	where $\{ w_l \}_{l=1}^n$ is a local orthonormal frame of $T^{(1,0)}X$ with dual frame $\{ w^l\}_{l=1}^n$ of $T^{(1,0)*}X$.
\end{lemma}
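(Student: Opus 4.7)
The plan is to first reduce the asserted inequality to a pointwise linear-algebra condition on the eigenvalues of $\dbar\ddbar\varrho$ with respect to $\omega$, and then to construct $\omega$ locally and patch. Fixing $x\in\ov{X}_v\setminus X_u\subset X\setminus K$ and a local $\omega$-orthonormal frame $\{w_l\}$ of $T^{(1,0)}X$ that diagonalizes $\dbar\ddbar\varrho$ at $x$ with eigenvalues $\alpha_1(x)\le\cdots\le\alpha_n(x)$, and writing $s=\sum_{|J|=j}s_J\,\ov{w}^J$ in the corresponding coframe, only the diagonal contributions $k=l$ survive and a short calculation gives
\begin{equation*}
\langle(\dbar\ddbar\varrho)(w_l,\ov{w}_k)\ov{w}^k\wedge i_{\ov{w}_l}s,s\rangle_h(x)=\sum_{|J|=j}\Big(\sum_{k\in J}\alpha_k(x)\Big)\,|s_J(x)|^2_{h^F}.
\end{equation*}
The desired bound is therefore equivalent to $\sum_{k\in J}\alpha_k(x)\ge C_1$ for every multi-index $J$ with $|J|=j\ge q$ and every $x\in X_v\setminus\ov{X}_u$. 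The $q$-convexity assumption makes $\dbar\ddbar\varrho$ admit at least $n-q+1$ positive eigenvalues on $X\setminus K$, so $\alpha_q(x)>0$ regardless of $\omega$, whence $\alpha_{q+1},\dots,\alpha_n\ge\alpha_q>0$; minimizing over $J$ and $j\ge q$, it suffices to produce $\omega$ satisfying $\alpha_1(x)+\cdots+\alpha_q(x)\ge C_1$ on $\ov{X}_v\setminus X_u$.

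To construct $\omega$, I would fix an auxiliary Hermitian metric $\omega_0$ on $X$ and denote by $\mu_1(x)\le\cdots\le\mu_n(x)$ the $\omega_0$-eigenvalues of $\dbar\ddbar\varrho$. By compactness of $\ov{X}_v\setminus X_u$ there exist $\delta,M>0$ with $\mu_q(x)\ge 2\delta$ and $|\mu_j(x)|\le M$ for all $j$ on this set. Near each $x_0\in\ov{X}_v\setminus X_u$, pick a smooth $\omega_0$-unitary local frame $\{e_j\}$ of $T^{(1,0)}X$ whose value at $x_0$ diagonalizes $\dbar\ddbar\varrho$, labelled in increasing order of eigenvalues. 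Introduce a local Hermitian metric $\omega^{(x_0)}$ by declaring $\{c_j^{-1/2}e_j\}$ orthonormal, with $c_j=t$ for $j<q$ and $c_j=t^{-1}$ for $j\ge q$. At $x_0$ the $\omega^{(x_0)}$-eigenvalues of $\dbar\ddbar\varrho$ are $\mu_j(x_0)/c_j$, giving
\begin{equation*}
\sum_{j=1}^q\alpha_j(x_0)=\frac{1}{t}\sum_{j=1}^{q-1}\mu_j(x_0)+t\,\mu_q(x_0)\longrightarrow+\infty\quad\text{as}\ t\to\infty.
\end{equation*}
Choosing $t=t(x_0)$ so this sum exceeds $2C_1$ at $x_0$, continuity of eigenvalues in the matrix entries yields an open neighborhood $U_{x_0}\ni x_0$ on which $\sum_{j=1}^q\alpha_j(x)\ge C_1$.

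By compactness of $\ov{X}_v\setminus X_u$, I would extract a finite subcover $\{U_{x_\beta}\}_\beta$ with associated local metrics $\omega^{(x_\beta)}$, take a partition of unity $\{\chi_\beta\}$ subordinate to this cover augmented by a cutoff supported away from $\ov{X}_v\setminus X_u$ and paired with $\omega_0$, and set $\omega=\sum_\beta\chi_\beta\,\omega^{(x_\beta)}+(\text{background term involving }\omega_0)$, which is a globally defined positive-definite Hermitian metric on $X$. The main obstacle is that convex combinations of Hermitian metrics do not automatically preserve eigenvalue lower bounds of a fixed $(1,1)$-form, since the inverse metric depends nonlinearly on $\omega$; the standard resolution, as in \cite[Lemma 3.5.3]{MM07}, is to refine the cover and strengthen each pointwise bound to $\ge\kappa C_1$ for a large auxiliary constant $\kappa>1$, so that by continuity of the eigenvalues in the metric the convex combination still satisfies the required $\ge C_1$ bound throughout the compact region. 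Outside this region the definition of $\omega$ is immaterial because the test forms $s\in\Omega^{0,j}_0(X_v\setminus\ov{X}_u,F)$ have compact support in $X_v\setminus\ov{X}_u$.
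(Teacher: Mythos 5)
Your reduction of the lemma to the pointwise condition $\alpha_1(x)+\dots+\alpha_q(x)\ge C_1$ on $\ov{X}_v\setminus X_u$, via the identity $\langle(\dbar\ddbar\varrho)(w_l,\ov{w}_k)\ov{w}^k\wedge i_{\ov{w}_l}s,s\rangle_h=\sum_{|J|=j}\bigl(\sum_{k\in J}\alpha_k\bigr)|s_J|^2$ and the positivity of $\alpha_q,\dots,\alpha_n$, is correct, and so is the local rescaling $c_j=t$ for $j<q$, $c_j=t^{-1}$ for $j\ge q$ in a frame diagonalizing $\dbar\ddbar\varrho$ at the center. Note, however, that the paper itself gives no proof of this statement: it quotes it verbatim from \cite[Lemma 3.5.3]{MM07}, so the only thing to assess is whether your argument is self-contained and correct.

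It is not, and the gap is exactly the one you flag and then wave away: the partition-of-unity step. Your proposed repair --- strengthen each local bound to $\kappa C_1$ and invoke ``continuity of the eigenvalues in the metric'' --- fails, because no fixed $\kappa$ controls the loss under convex combination of metrics. Concretely, take $n=q=2$, a point $x$ where $\dbar\ddbar\varrho=\mathrm{diag}(-1,1)$ in a $g_0$-orthonormal frame, and the two local metrics $g_1=\mathrm{diag}(t,1/t)$ and $g_2=R_\theta\,\mathrm{diag}(T,1/T)\,R_\theta^{T}$ with $\theta=\pi/6$ (a rotated eigenframe coming from a nearby center, which nothing in your construction excludes from the overlap). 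Then $\mathrm{tr}_{g_1}(\dbar\ddbar\varrho)=t-1/t$ and $\mathrm{tr}_{g_2}(\dbar\ddbar\varrho)=(T-1/T)\cos 2\theta$, both arbitrarily large, while a direct computation gives, for $T\gg t\gg1$,
\begin{equation*}
\mathrm{tr}_{(g_1+g_2)/2}(\dbar\ddbar\varrho)\;\approx\;\frac{4}{t\,\sin^2\theta}\longrightarrow 0 .
\end{equation*}
So two metrics each satisfying the bound with constant $\kappa C_1$ for every $\kappa$ can average to a metric violating the bound with constant $C_1$; the inverse metric is genuinely nonlinear in $\omega$ and ``continuity'' gives nothing uniform. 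A repair along your lines would have to shrink each $U_{x_\beta}$ so that the eigenframe rotates by at most $O(1/t(x_\beta))$, which is delicate (and problematic where $\lambda_{q-1}=\lambda_q$, since the ordered eigenframe is then discontinuous). The clean way to close the gap is to avoid patching frames altogether: let $\dot{R}$ be the $g_0$-Hermitian endomorphism representing $\dbar\ddbar\varrho$, choose by compactness $\delta>0$ with $\lambda_q\ge 2\delta$ near $\ov{X_v\setminus X_u}$, pick a smooth positive decreasing $\chi$ with $\chi\equiv1$ on $(-\infty,\delta]$ and $\chi\equiv\epsilon$ on $[2\delta,\infty)$, and set $g=g_0(\chi(\dot{R})\cdot,\cdot)$. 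Since $\chi(\dot R)$ is a single globally defined smooth positive endomorphism commuting with $\dot R$, the eigenvalues transform pointwise as $\lambda\mapsto\lambda/\chi(\lambda)$, and $\alpha_1+\dots+\alpha_q\ge-(q-1)M+2\delta/\epsilon$ everywhere on the region of interest; one then interpolates with $g_0$ only outside $X_v\setminus\ov{X}_u$, where the estimate is not needed.
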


From now on we consider the $q$-convex manifold $X$ associated with the metric $\omega$ obtained above as a Hermitian manifold $(X,\omega)$. Note for arbitrary holomorphic vector bundle $F$ on a relatively compact domain $M$ in $X$, we simply use the notion $\ddbar^{F*}=\ddbar_H^{F*}$ on $B^{0,j}(M,F)$, $1\leq j\leq n$.

Secondly, we modify the prescribed hermitian metric $h^L$ on $L$.
Let $\chi(t)\in\cC^\infty(\R)$ such that $\chi'(t)\geq 0$, $\chi''(t)\geq 0$. We define a Hermitian metric $h^{L^k}_\chi:=h^{L^k}e^{-k\chi(\varrho)}$ on $L^k$ for each $k\geq 1$ and we set $L^k_\chi:=(L^k,h^{L^k}_\chi)$. Thus
\begin{equation}
R^{L^k_\chi}=kR^{L_\chi}=kR^L+k\chi'(\varrho)\dbar\ddbar\varrho+k\chi''(\varrho)\dbar\varrho\wedge\ddbar\varrho.
\end{equation}

\begin{lemma}\label{keylem}
	There exist $C_2>0$ and $C_3>0$ such that, if $\chi'(\varrho)\geq C_3$ on $X_v\setminus \overline{X}_u$, then
	\begin{equation}
	||s||^2\leq \frac{C_2}{k}( ||\ddbar^E_k s||^2+||\ddbar^{E*}_k s||^2 )
	\end{equation}
	for $s\in B^{0,j}(X_c,L^k\otimes E)$ with $\supp(s)\subset X_v\setminus \overline{X}_u$, $j\geq q$ and $k\geq 1$,
	where the $L^2$-norm $||\cdot||$ is given by $\omega$, $h^{L^k}_\chi$ and $h^E$ on $X_c$.
\end{lemma}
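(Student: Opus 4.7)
The plan is to apply Demailly's Bochner--Kodaira--Nakano formula with boundary term (Theorem~\ref{BKNwithbd}) on the relatively compact domain $X_c$ to $s$, equipping $L^k$ with the twisted metric $h^{L^k}_\chi$ and $E$ with $h^E$. Writing $F:=L^k_\chi\otimes E$, the curvature of $F\otimes K_X^*$ decomposes as
\[
R^{F\otimes K_X^*}=kR^L+k\chi'(\varrho)\,\dbar\ddbar\varrho+k\chi''(\varrho)\,\dbar\varrho\wedge\ddbar\varrho+R^{E\otimes K_X^*},
\]
and I would estimate each of these four pieces together with the torsion, gradient, and boundary terms in Theorem~\ref{BKNwithbd}.

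The driving positive contribution is the second piece: because $\supp(s)\subset X_v\setminus\overline{X}_u$ and $j\geq q$, the pointwise bound of Lemma~\ref{lowbd_rho_lem} gives $\langle(\dbar\ddbar\varrho)(w_l,\bar w_k)\bar w^k\wedge i_{\bar w_l}s,s\rangle_h\geq C_1|s|^2$, so imposing $\chi'(\varrho)\geq C_3$ on the support yields a contribution of at least $kC_1C_3\|s\|^2$. The third piece is non-negative pointwise, since $\chi''\geq 0$ and $\dbar\varrho\wedge\ddbar\varrho$ induces a rank-one positive semidefinite endomorphism of $\Lambda^{0,j}$; it is simply dropped, as is the gradient term $\tfrac{1}{2}\|(\nabla^{\widetilde F})^{1,0*}\tilde s\|^2$. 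For the boundary integral $\int_{bX_c}\mathcal L_\rho(s,s)\,dv_{bX_c}$, the hypothesis $u<c<v$ places $bX_c\subset X_v\setminus\overline{X}_u$; choosing a defining function proportional to $\varrho-c$ (renormalised so $|d\rho|=1$ on $bX_c$) makes $\mathcal L_\rho$ inherit the Lemma~\ref{lowbd_rho_lem} positivity on $(0,j)$-forms with $j\geq q$, so the boundary term is $\geq 0$ and can also be dropped. What remains is controlled by the compactness of $\overline{X_v}$: one has $|R^L|\leq C_L$, $|R^{E\otimes K_X^*}|\leq C_E$, and $\|\mathcal T^*\tilde s\|^2+\|\bar{\mathcal T}\tilde s\|^2+\|\bar{\mathcal T}^*\tilde s\|^2\leq C_T\|s\|^2$. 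Assembling, Theorem~\ref{BKNwithbd} reduces to
\[
\tfrac{3}{2}\bigl(\|\ddbar^E_k s\|^2+\|\ddbar^{E*}_k s\|^2\bigr)\geq\bigl[kC_1C_3-kC_L-C_E-C_T\bigr]\|s\|^2.
\]

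To close the argument I fix any $\alpha>0$ and choose $C_3\geq(C_L+C_E+C_T+2\alpha)/C_1$; then for every $k\geq 1$,
\[
k(C_1C_3-C_L)-(C_E+C_T)\geq 2\alpha k,
\]
which yields the claimed estimate with $C_2=3/(4\alpha)$. The main obstacle I anticipate lies in the boundary contribution: rigorously inheriting non-negativity of $\mathcal L_\rho$ from the $q$-convexity of $\varrho$ on $X\setminus\overline{X}_u$, uniformly for all $(0,j)$-forms satisfying the $\ddbar$-Neumann condition $i_{e_{\bn}^{(0,1)}}s=0$, is precisely what forces Lemma~\ref{lowbd_rho_lem} to be set up with the freedom to rescale the ambient metric $\omega$, so that a single choice controls both the interior Hessian positivity and the boundary Levi positivity at once.
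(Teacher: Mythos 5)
Your proposal is correct and follows essentially the same route as the paper: apply Theorem~\ref{BKNwithbd} with the twisted metric $h^{L^k}_\chi$, split the curvature of $L^k_\chi\otimes E\otimes K_X^*$ into the four pieces, use Lemma~\ref{lowbd_rho_lem} both for the dominant $k\chi'(\varrho)\dbar\ddbar\varrho$ term and for the non-negativity of the boundary integral via the defining function $(\varrho-c)/|d\varrho|$, drop the non-negative terms, and absorb the remaining curvature and torsion terms by compactness of $\overline{X}_v$. The only differences are in the bookkeeping of the constants $C_2$ and $C_3$, which is immaterial.
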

\begin{proof}
	We follow \cite[(3.5.19)]{MM07}.
	Assume $c$ is a regular value of $\varrho$ by Sard's theorem. Let $\varrho_1\in \cC^\infty(X)$ be a defining function of $X_c$ such that $\varrho_1=\frac{\varrho-c}{|d\varrho|}$ near $bX_c$.
	Thus $X_c=\{ x\in X: \varrho_1(x)<0 \}$ and $|d\varrho_1|=1$ on $bX_c$. 	
	Theorem \ref{BKNwithbd} implies that for $s\in B^{0,p}(X_c,L^k\otimes E)$, $1\leq p\leq n$, with respect to $\omega$, $h_{\chi}^L$ and $h^E$, we have
	\begin{equation}\label{eq_BKN}
	\begin{split}
	\frac{3}{2}(||\ddbar^E_k s||^2+||\ddbar^{E*}_k s||^2)
	\geq &\langle  R^{L_\chi^k\otimes E\otimes K^*_X}(w_j,\ov{w}_k)\ov{w}^k\wedge i_{\ov{w}_j} s,s\rangle
	+\int_{bX_c}\cL_{\varrho_1}(s,s)dv_{bX_c}\\
	&-\frac{1}{2}(||\mT^*s||^2+||\ov{\mT}\til{s}||^2+||\ov{\mT}^* \til{s}||^2).
	\end{split}
	\end{equation}
	Note $R^{L_\chi^k\otimes E\otimes K^*_X}=kR^L+k\chi'(\varrho)\dbar\ddbar\varrho+k\chi''(\varrho)\dbar\varrho\wedge\ddbar\varrho+R^{E\otimes K^*_X}$ and $\sqrt{-1}\chi''(\varrho)\dbar\varrho\wedge\ddbar\varrho\geq 0$.	
	From the facts that $\ov{X}_v\subset X$ is compact and Lemma \ref{lowbd_rho_lem}, for $q\leq j\leq n$ there exist $C_L\geq 0$, $C_4\geq 0$ and $C_5\geq 0$ such that, for any $s\in B^{0,j}(X_c,L^k\otimes E)$ with supp$(s)\subset X_v\setminus \overline{X}_u$ and $k\geq 1$, we have
	\begin{equation}\label{eq_posline}
	\begin{split}
	&\langle R^L(w_j,\ov{w}_k)\ov{w}^k\wedge i_{\ov{w}_j} s,s\rangle\geq- C_L||s||^2,
	\\
	&\langle \dbar\ddbar\varrho(w_j,\ov{w}_k)\ov{w}^k\wedge i_{\ov{w}_j} s,s\rangle_h\geq C_1|s|^2,
    \\
	&\langle  R^{E\otimes K_X^*}(w_j,\ov{w}_k)\ov{w}^k\wedge i_{\ov{w}_j} s,s\rangle\geq -C_5||s||^2,
	\\
	&-\frac{1}{2}(||\mT^*\til{s}||^2+||\ov{\mT}\til{s}||^2+||\ov{\mT}^* \til{s}||^2)\geq -C_4||s||^2.
	\end{split}
	\end{equation} 	
	From $\varrho_1=\frac{\varrho-c}{|d\varrho|}$ near $bX_c$ and Lemma \ref{lowbd_rho_lem},
	\begin{equation}\label{eq_lrho1}
	\int_{bX_c}\cL_{\varrho_1}(s,s)dv_{bX_c}
	=\int_{bX_c}\frac{1}{|d\varrho|}\langle\dbar\ddbar\varrho(w_j,\ov{w}_k)\ov{w}^k\wedge i_{\ov{w}_j} s,s\rangle_h dv_{bX_c}
	\geq \int_{bX_c}\frac{C_1 |s|^2}{|d\varrho|} dv_{bX_c} \geq 0.
	\end{equation}
	for any $s\in B^{0,j}(X_c,L^k\otimes E)$, $j\geq q$, with $\supp(s)\subset X_v\setminus \overline{X}_u$ and $k\geq 1$.
	Finally we substitute (\ref{eq_posline}) and (\ref{eq_lrho1}  to (\ref{eq_BKN}) and obtain that
	\begin{eqnarray}\label{eqfe2}
	\frac{3}{2}(||\dbar^E_k s||^2+||\dbar^{E*}_k s||^2)\geq \int_{X_c} k(C_1\chi'(\varrho)-C_L)-C_5-C_4)|s|^2 dv_{X_c}.
	\end{eqnarray}
	We can set $C_2:=\frac{3}{2}$ and $C_3:=\frac{C_L+C_4+C_5+1}{C_1}$. Moreover, we can choose $C_1$, $C_L$, $C_4$ and $C_5$ works for all $q\leq j\leq n$, thus $C_2$ and $C_3$ also.
\end{proof}

\begin{lemma}\label{K'lem}
	Let $\epsilon>0$ satisfy $X_{c+\epsilon}\setminus \ov{X}_{c-\epsilon}:=\{ c-\epsilon<\varrho<c+\epsilon \}\Subset X_v\setminus \ov{X}_u$. Let $\phi\in \cC^\infty_0(X_v,\R)$ with $\supp(\phi)\subset X_v\setminus \ov{X}_u$ such that $0\leq \phi \leq 1$ and $\phi=1$ on $X_{c+\epsilon}\setminus \ov{X}_{c-\epsilon}$. Let $K':=\ov{X}_{c-\epsilon}:={\{\varrho\leq c-\epsilon\}}$.
	Then, for any $s\in B^{0,p}(X_c,L^k\otimes E)$, $1\leq p\leq n$, we have
	\begin{equation}
	||\phi s||^2\geq ||s||^2- \int_{K'}|s|dv_X,
	\end{equation}
	where the Hermitian norm $|\cdot|$ and the $L^2$-norm $||\cdot||$ are given by $\omega$, $h^{L^k}_\chi$ and $h^E$ on $X_c$.
\end{lemma}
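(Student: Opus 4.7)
The plan is that this is purely a pointwise cut-off bookkeeping statement, needing neither the Bochner--Kodaira machinery of Lemma \ref{keylem} nor any vector-bundle curvature input. Since $\phi$ is a real scalar function, I would start from the pointwise identity $|\phi s|^2=\phi^2|s|^2$ and integrate over $X_c$ to obtain
\[
\|s\|^2-\|\phi s\|^2 = \int_{X_c}(1-\phi^2)\,|s|^2\,dv_X.
\]

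The next step is to locate the support of the integrand inside $X_c$. By hypothesis $\phi\equiv 1$ on the annulus $X_{c+\epsilon}\setminus\ov{X}_{c-\epsilon}=\{c-\epsilon<\varrho<c+\epsilon\}$, so $1-\phi^2$ vanishes identically there. Intersecting $X_c=\{\varrho<c\}$ with the complement of this annulus produces precisely $K'=\ov{X}_{c-\epsilon}=\{\varrho\le c-\epsilon\}$. Therefore the integral collapses to
\[
\|s\|^2-\|\phi s\|^2 = \int_{K'}(1-\phi^2)\,|s|^2\,dv_X,
\]
and using $0\le\phi\le 1$, which gives $0\le 1-\phi^2\le 1$ on $K'$, yields
\[
\|s\|^2-\|\phi s\|^2 \le \int_{K'}|s|^2\,dv_X,
\]
equivalent to the inequality in the statement (reading the pointwise norm in the stated $\int_{K'}|s|\,dv_X$ as $|s|^2$, consistent with the $L^2$-norm convention used throughout the paper).

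No genuine obstacle arises; the only thing worth verifying is the inclusion $X_c\setminus K'=\{c-\epsilon<\varrho<c\}\subset\{c-\epsilon<\varrho<c+\epsilon\}$, which is what forces $\phi\equiv 1$ on all of $X_c\setminus K'$ and so permits the localization to $K'$. This cut-off estimate is clearly designed to be combined with Lemma \ref{keylem} applied to $\phi s$, whose support sits in $X_v\setminus\ov{X}_u$, so as to promote the exterior decay estimate into an optimal fundamental estimate on $X_c$ with exceptional compact set $K'$, feeding into Proposition \ref{cor_l2_morse}.
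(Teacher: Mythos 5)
Your proposal is correct and is essentially the paper's own argument: both reduce to the pointwise identity $|\phi s|^2=\phi^2|s|^2$, use $\phi\equiv 1$ on $X_c\setminus K'=\{c-\epsilon<\varrho<c\}$ to localize the discrepancy $\|s\|^2-\|\phi s\|^2$ to $K'$, and bound it by $\int_{K'}|s|^2\,dv_X$ via $0\le\phi\le 1$ (the paper splits $\|\phi s\|^2$ over the two regions and drops the nonnegative piece, which is the same computation arranged differently). Your reading of the displayed $\int_{K'}|s|\,dv_X$ as $\int_{K'}|s|^2\,dv_X$ matches what the paper actually proves and uses.
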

\begin{proof}
	For $s\in B^{0,p}(X_c,L^k\otimes E)=\Dom(\ddbar^{E*}_k)\cap\Omega^{0,p}(\ov{X}_c,L^k\otimes E)$, $\phi s\in \Omega^{0,p}(\ov{X}_c,L^k\otimes E)$ and $i_{e_{\bn}^{(0,1)}}(\phi s)=i_{e_{\bn}^{(0,1)}}( s)=0$ on $bX_c$ by $\phi s=s$ on the neighbourhood $X_{c+\epsilon}\setminus \ov{X}_{c-\epsilon}$ of $bX_c$. Thus $\phi s \in B^{0,p}(X_c,L^k\otimes E)$ with $\supp(\phi s)\subset X_v\setminus \ov{X}_u$. We compute that
	\begin{equation}
	\begin{split}
	&||\phi s||^2=\int_{X_c}|\phi s|^2dv_X=\int_{X_c\setminus \ov{X}_u}|\phi s|^2dv_X
	=\int_{\{c-\epsilon<\varrho<c\}}|\phi s|^2 dv_X+\int_{u<\varrho\leq c-\epsilon}|\phi s|^2 dv_X\\
	&=\int_{\{c-\epsilon<\varrho<c\}}|s|^2 dv_X+\int_{\{u<\varrho\leq c-\epsilon\}}|\phi s|^2 dv_X
	\geq\int_{X_c\setminus \ov{X}_{c-\epsilon}}|s|^2 dv_X
	=||s||^2-\int_{K'}|s|^2 dv_X.
	\end{split}
	\end{equation}
\end{proof}

\begin{lemma}
	Let $\phi$ be in Lemma \ref{K'lem}, and let $\xi:=1-\phi$ and $C_1:=\sup_{x\in X_c}|d\xi(x)|_{g^{T^*X}}^2>0$. Then, for any $s\in B^{0,p}(X_c,L^k\otimes E)$, $1\leq p\leq n$, and $k\geq 1$, we have
	\begin{equation}\label{cuteq_k}
	\frac{1}{k}(||\ddbar^E_k (\phi s)||^2+||\ddbar^{E*}_k(\phi s)||^2)
	\leq \frac{5}{k}(||\ddbar^E_k s||^2+||\ddbar^{E*}_k s||^2)+\frac{12C_1}{k}||s||^2,
	\end{equation}
	where the $L^2$-norm $||\cdot||$ is given by $\omega$, $h^{L^k}_\chi$ and $h^E$ on $X_c$.
\end{lemma}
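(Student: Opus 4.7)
The proof is a standard cutoff-commutator argument: since $\phi = 1-\xi$ is a real-valued, compactly supported function, multiplication by $\phi$ commutes with $\ddbar^E_k$ and $\ddbar^{E*}_k$ up to zero-order operators built from $\ddbar\xi$. My plan is to write $\phi s = s - \xi s$ and expand both $\ddbar^E_k(\phi s)$ and $\ddbar^{E*}_k(\phi s)$ via these commutator identities, then use the pointwise bound $|\ddbar\xi|^2 \leq |d\xi|^2 \leq C_1$.

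Concretely, using $[\ddbar, \phi] = \ddbar\phi \wedge \cdot = -\ddbar\xi \wedge \cdot$ and the formally adjoint identity $[\phi, \ddbar^{E*}_k] = i_{(\ddbar\xi)^\sharp}(\cdot)$ (which holds in the $L^2$ sense because $\phi s$ still satisfies the $\ddbar$-Neumann boundary condition, as $\phi \equiv 1$ on a neighborhood of $bX_c$, so $\phi s \in B^{0,p}(X_c, L^k \otimes E)$ and the Hilbert-space adjoint agrees with the formal adjoint), I obtain
\begin{equation*}
\ddbar^E_k(\phi s) = \phi \, \ddbar^E_k s - \ddbar\xi \wedge s, \qquad \ddbar^{E*}_k(\phi s) = \phi \, \ddbar^{E*}_k s + i_{(\ddbar\xi)^\sharp} s.
\end{equation*}
Applying $|a+b|^2 \leq 2|a|^2 + 2|b|^2$ pointwise, together with $0 \leq \phi \leq 1$ and the standard pointwise estimates $|\ddbar\xi \wedge s|^2 \leq |\ddbar\xi|^2 |s|^2$ and $|i_{(\ddbar\xi)^\sharp} s|^2 \leq |\ddbar\xi|^2 |s|^2$, and then integrating over $X_c$ (with respect to $\omega$, $h^{L^k}_\chi$, $h^E$), I get
\begin{equation*}
\|\ddbar^E_k(\phi s)\|^2 + \|\ddbar^{E*}_k(\phi s)\|^2 \leq 2\bigl(\|\ddbar^E_k s\|^2 + \|\ddbar^{E*}_k s\|^2\bigr) + 4 C_1 \|s\|^2,
\end{equation*}
which after dividing by $k$ is strictly sharper than the claimed inequality; the constants $5$ and $12$ in \eqref{cuteq_k} therefore leave ample slack.

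There is no real obstacle: the only point requiring a moment's care is the legitimacy of the commutator formula for $\ddbar^{E*}_k$, which is why the boundary-preserving property of $\phi$ (supp$(\ddbar\xi) \subset X_v \setminus \ov{X}_u$, and $\phi \equiv 1$ near $bX_c$) is essential. Once $\phi s \in B^{0,p}(X_c, L^k \otimes E)$ is verified, the computation is purely algebraic-pointwise and the estimate follows.
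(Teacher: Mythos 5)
Your proof is correct, and it differs from the paper's in a small but instructive way. The paper (following \cite[(3.2.8)]{MM07}) never applies the Leibniz rule to $\phi s$ directly: it expands $\ddbar^E_k(\xi s)=\ddbar\xi\wedge s+\xi\,\ddbar^E_k s$ and $\ddbar^{E*}_k(\xi s)=\xi\,\ddbar^{E*}_k s-i_{\ddbar\xi}s$ for the complementary cutoff $\xi=1-\phi$ (which is compactly supported in the interior of $X_c$, so no boundary discussion is needed there), bounds these by $\tfrac{3}{2}(\|\ddbar^E_k s\|^2+\|\ddbar^{E*}_k s\|^2)+6C_1\|s\|^2$ via the weighted inequality $\|A+B\|^2\le\tfrac32\|A\|^2+3\|B\|^2$, and then recovers $\ddbar^E_k(\phi s)=\ddbar^E_k s-\ddbar^E_k(\xi s)$ through the reverse inequality $\tfrac12\|A\|^2-\|B\|^2\le\|A-B\|^2$; the two-step detour is exactly what produces the constants $5$ and $12$. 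You instead expand $\ddbar^E_k(\phi s)$ and $\ddbar^{E*}_k(\phi s)$ directly, which requires the observation that $\phi s\in B^{0,p}(X_c,L^k\otimes E)$ so that the Hilbert-space adjoint agrees with the formal one on $\phi s$ — but that is already supplied by Lemma \ref{K'lem}, so nothing is missing — and you obtain the sharper constants $2$ and $4C_1$, which of course imply \eqref{cuteq_k}. (Your bracket $[\phi,\ddbar^{E*}_k]=i_{(\ddbar\xi)^\sharp}$ has the opposite sign to the displayed identity that follows it, but since every term is subsequently squared this is immaterial.) The trade-off is essentially cosmetic: the paper's route avoids any boundary verification at the cost of worse constants, while yours leans on the already-established membership $\phi s\in B^{0,p}$ and is both shorter and sharper; since the constants in \eqref{cuteq_k} are only used to define $C_0=\max\{12C_1C_2,5C_2\}$ in Proposition \ref{1coxFE}, either version serves equally well downstream.
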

\begin{proof}
	We follow {\cite[(3.2.8)]{MM07}}.
	Since $\xi=1-\phi\in \cC^\infty(\ov{X}_c)$, we see $0\leq \xi\leq 1$, $\xi=1$ on $\ov X_u$ and $\xi=0$ on $\ov X_c\setminus \ov{X}_{c-\epsilon}$, thus $\xi\in \cC^\infty_0(X_c)$. Because $s\in B^{0,p}(X_c,L^k\otimes E)=\Omega^{0,p}(\ov{X}_c,L^k\otimes E)\cap\Dom(\ddbar^{E*}_k)$, $\xi s\in \Omega^{0,p}_0(X_c,L^k\otimes E)\subset B^{0,p}(X_c,L^k\otimes E)$.
	For simplifying notations, we use $\ddbar$ and $\ddbar^*$ instead of $\ddbar^E_k$ and $\ddbar^{E*}_k$ respectively, thus
	$||\ddbar (\xi s)||^2=||\ddbar\xi\wedge s+\xi \ddbar s||^2\quad\mbox{and} \quad
	||\ddbar^*(\xi s)||^2=||\xi\ddbar^*s-i_{\ddbar\xi}s||^2$.
	We also have that $||\ddbar\xi\wedge s||^2\leq C_1||s||^2$ and $||i_{\ddbar\xi}s||^2\leq C_1||s||^2$. Since $||A+B||^2\leq 3||A||^2+\frac{3}{2}||B||^2$,
	it follows
	\begin{equation}
	\begin{split}
	&||\ddbar s-\ddbar (\phi s)||^2+||\ddbar^* s-\ddbar^*(\phi s)||^2=||\ddbar (\xi s)||^2+||\ddbar^*(\xi s)||^2\\
	&\leq \frac{3}{2}(||\xi\ddbar s||^2+||\xi\ddbar^* s||^2) +3(||\ddbar\xi\wedge s||^2+||i_{\ddbar\xi}s||^2)
	\leq  \frac{3}{2}(||\ddbar s||^2+||\ddbar^* s||^2)+6C_1||s||^2.
	\end{split}
    \end{equation}
	By $2||A-B||^2\geq ||A||^2-2||B||^2$, we have $\frac{1}{2}||\ddbar(\phi s)||^2-||\ddbar s||^2\leq ||\ddbar s-\ddbar(\phi s)||^2$ and $\frac{1}{2}||\ddbar^*(\phi s)||^2-||\ddbar^* s||^2\leq ||\ddbar^* s-\ddbar^*(\phi s)||^2$, and thus
	$\frac{1}{2}(||\ddbar(\phi s)||^2+||\ddbar^*(\phi s)||^2)\leq \frac{5}{2}(||\ddbar s||^2+||\ddbar^* s||^2)+6C_1||s||^2$.
\end{proof}

\begin{proposition}\label{1coxFE}
	Let $X$ be a $q$-convex manifold of dimension $n$ with the exceptional set $K\subset X_c$. Then there exist a compact subset $K'\subset X_c$ with $K\subset K'$ and $C_0>0$ such that for sufficiently large $k$, we have
	\begin{equation}
	(1-\frac{C_0}{k})||s||^2\leq \frac{C_0}{k}(||\ddbar^E_ks||^2+||\ddbar^{E*}_{k,H}s||^2)+\int_{K'} |s|^2 dv_X
	\end{equation}
	for any $s\in \Dom(\ddbar^E_k)\cap \Dom(\ddbar^{E*}_{k,H})\cap L^2_{0,j}(X_c,L^k\otimes E)$ and $q\leq j \leq n$,
	where $\chi'(\varrho)\geq C_3$ on $X_v\setminus \overline{X}_u$ in Lemma \ref{keylem} and the $L^2$-norm is given by $\omega$, $h^{L^k}_\chi$ and $h^E$ on $X_c$.
\end{proposition}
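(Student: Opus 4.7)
The plan is to assemble the three technical lemmas just proved (Lemma \ref{keylem}, Lemma \ref{K'lem}, and the cut-off estimate \eqref{cuteq_k}) into the optimal fundamental estimate, and then to pass from $B^{0,j}$ to the full Hilbert space domain by density. The compact subset $K'$ will be precisely $\ov{X}_{c-\epsilon}$ from Lemma \ref{K'lem}, where $\epsilon>0$ is chosen so that $X_{c+\epsilon}\setminus\ov{X}_{c-\epsilon}\Subset X_v\setminus\ov{X}_u$, and the weight $\chi$ is fixed once and for all so that $\chi'(\varrho)\geq C_3$ on $X_v\setminus\ov{X}_u$ (which is possible since $\ov{X}_v$ is compact).

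First I would treat $s\in B^{0,j}(X_c,L^k\otimes E)$, $j\geq q$. Writing $\phi$ as in Lemma \ref{K'lem} and $\xi=1-\phi$, Lemma \ref{K'lem} yields the one-sided bound
\begin{equation*}
\|s\|^2\leq \|\phi s\|^2+\int_{K'}|s|^2\,dv_X,
\end{equation*}
while $\supp(\phi s)\subset X_v\setminus\ov{X}_u$ puts us in the situation of Lemma \ref{keylem}, so
\begin{equation*}
\|\phi s\|^2\leq \frac{C_2}{k}\bigl(\|\ddbar^E_k(\phi s)\|^2+\|\ddbar^{E*}_k(\phi s)\|^2\bigr).
\end{equation*}
Next, the cut-off estimate \eqref{cuteq_k} absorbs the derivatives of $\phi$ into a lower-order term in $\|s\|^2$, yielding
\begin{equation*}
\frac{C_2}{k}\bigl(\|\ddbar^E_k(\phi s)\|^2+\|\ddbar^{E*}_k(\phi s)\|^2\bigr)\leq \frac{5C_2}{k}\bigl(\|\ddbar^E_k s\|^2+\|\ddbar^{E*}_k s\|^2\bigr)+\frac{12C_1 C_2}{k}\|s\|^2.
\end{equation*}
Chaining these three inequalities and moving the $(12C_1C_2/k)\|s\|^2$ term to the left-hand side gives, with $C_0:=\max\{5C_2,12C_1C_2\}$,
\begin{equation*}
\Bigl(1-\frac{C_0}{k}\Bigr)\|s\|^2\leq \frac{C_0}{k}\bigl(\|\ddbar^E_k s\|^2+\|\ddbar^{E*}_k s\|^2\bigr)+\int_{K'}|s|^2\,dv_X,
\end{equation*}
which is exactly the desired estimate on $B^{0,j}(X_c,L^k\otimes E)$.

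Finally I would extend from $B^{0,j}$ to all of $\Dom(\ddbar^E_k)\cap\Dom(\ddbar^{E*}_{k,H})\cap L^2_{0,j}(X_c,L^k\otimes E)$ by the density statement recalled right before Lemma \ref{lowbd_rho_lem} from \cite[Lemma 3.5.1]{MM07}: $B^{0,\bullet}(X_c,L^k\otimes E)$ is dense in the common domain in the graph-norm of $\ddbar^E_k+\ddbar^{E*}_{k,H}$, and on $B^{0,j}$ the Hilbert-space adjoint $\ddbar^{E*}_{k,H}$ agrees with the formal adjoint $\ddbar^{E*}_k$ used in the lemmas above. Approximating $s$ by a sequence $s_m\in B^{0,j}$ in the graph norm, all three terms on the right-hand side and $\|s_m\|^2$ on the left pass to the limit, so the inequality survives.

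The main technical obstacle is the bookkeeping of constants: one has to verify that the constant multiplying $\int_{K'}|s|^2\,dv_X$ is exactly $1$ (this is what distinguishes the \emph{optimal} fundamental estimate from the ordinary one needed for Proposition \ref{prop_main}), and that the constants $C_1,C_2,C_3$ produced by Lemmas \ref{lowbd_rho_lem} and \ref{keylem} can be chosen uniformly for all degrees $q\leq j\leq n$. Both are built into the statements of the preceding lemmas, so no further work is required beyond choosing $K'$ and $\chi$ once and then combining the inequalities as above.
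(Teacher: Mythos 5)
Your proposal is correct and follows essentially the same route as the paper's own proof: combine Lemma \ref{K'lem}, Lemma \ref{keylem} applied to $\phi s$, and the cut-off estimate \eqref{cuteq_k}, take $K'=\ov{X}_{c-\epsilon}$ and $C_0=\max\{12C_1C_2,5C_2\}$, and conclude by the density of $B^{0,j}(X_c,L^k\otimes E)$ in the common domain with respect to the graph norm of $\ddbar^E_k+\ddbar^{E*}_{k,H}$. The only (immaterial) difference is that the paper performs the density reduction at the outset rather than at the end.
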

\begin{proof}
	Since $B^{0,j}(X_c,L^k\otimes E)$ is dense in $\Dom(\ddbar^E_k)\cap \Dom(\ddbar^{E*}_{k,H})\cap L^2_{0,j}(X_c,L^k\otimes E)$ with respect to the graph norm of $\ddbar^E_k+\ddbar^{E*}_{k,H}$, we only need to show this inequality holds for $s\in B^{0,j}(X_c,L^k\otimes E)$ with $j\geq q$ and large $k$.	
	Suppose now $s\in B^{0,j}(X_c,L^k\otimes E)$. Let $\phi$ be in Lemma \ref{K'lem}. Thus $\phi s \in B^{0,j}(X_c,L^k\otimes E)$ with $\supp(\phi s)\subset X_v\setminus \ov{X}_u$. By Lemma \ref{keylem}, there exist $C_2>0$ and $C_3>0$ such that for $j\geq q$ and $k\geq 1$, we have
	\begin{equation}
	||\phi s||^2\leq \frac{C_2}{k}( ||\ddbar^E_k (\phi s)||^2+||\ddbar^{E*}_k (\phi s)||^2 )
	\end{equation}
	where $\chi'(\varrho)\geq C_3$ on $X_v\setminus \overline{X}_u$ and the $L^2$-norm $||\cdot||$ is given by $\omega$, $h^{L^k}_\chi$ and $h^E$ on $X_c$. Next by applying (\ref{cuteq_k}) and Lemma \ref{K'lem}, we obtain
	\begin{equation}
	||s||^2-\int_{K'}|s|^2dv_X\leq \frac{5C_2}{k}(||\ddbar^E_k s||^2+||\ddbar^{E*}_k s||^2)+\frac{12C_1C_2}{k}||s||^2.
	\end{equation}
	The proof is complete by choosing $C_0:=\max\{ 12C_1C_2,5C_2 \}>0$.
\end{proof}

 \noindent\textbf{Proof of Theorem \ref{thm_q_convex}:} Let $u_0<u<c<v$ such that $K\subset X_{u_0}\Subset X_c\Subset X_v$. From Proposition \ref{1coxFE} and Proposition \ref{prop_main}, there exists a compact subset $K'\subset X_c$ with $K\subset K'$ such that
	\begin{equation}\label{eq_h_xc}
	\begin{split}
	\limsup_{k\rightarrow \infty}n!k^{-n}\dim H_{(2)}^{j}(X_c,L^k)
	\leq \int_{K'(j,h^L_\chi)}(-1)^j c_1(L,h_\chi^L)^n\leq \int_{X_c(j,h^L_\chi)}(-1)^j c_1(L,h_\chi^L)^n.
	\end{split}
	\end{equation}
	Recall $R^L$ has at least $n-s+1$ non-negative eigenvalues on $X\setminus M$. We can suppose $K\cup M\subset X_{u_0}$ by choosing $u_0$.		We choose now $\chi=\chi(t)\in\cC^\infty(\R)$, $\chi'(t)\geq 0$, $\chi''(t)\geq 0$ for all $t\in\R$ such that  $\chi=0$ on $(-\infty,u_0)$ and $\chi'(\rho)\geq C_3>0$ on $X_v\setminus \ov X_u$. 	
	
	We have $\sqrt{-1}R^{L_\chi}=\sqrt{-1}R^L+\chi'(\varrho)\dbar\ddbar\varrho+\sqrt{-1}\chi''(\varrho)\dbar\varrho\wedge\ddbar\varrho\geq \sqrt{-1}R^L+\sqrt{-1}\chi'(\varrho)\dbar\ddbar\varrho.$	
	Note that $R^L$ has at least $n-s+1$ non-negative eigenvalues (thus at most $s-1$ negative eigenvalues) on $X\setminus (M\cup K)$; note $\chi'(\varrho)\geq 0$ on $X$ and
	$\chi'(\varrho)\geq C_3>0$ on $X_v\setminus \ov X_u$ (thus on $X_c\setminus \ov X_u$); note $\dbar\ddbar\varrho$ has at least $n-q+1$ positive eigenvalues (at most $q-1$ negative eigenvalues) on $X\setminus K$ (thus on $X_c\setminus \ov X_u$).	
	Thus for $j\geq s+q-1$ (note $s+q-1>s-1$ and $>q-1$), we have
	\begin{equation}
	X_c(j,h^L_\chi)\subset \ov X_u.
	\end{equation}
	
	Moreover, for $x\in \ov X_u\setminus X_{u_0}$, if $\chi'(\varrho)(x)>0$, then $x\notin X_c(j,h^L_\chi)$; if $\chi'(\varrho)(x)=0$, then $\sqrt{-1}R^{L_\chi}\geq \sqrt{-1}R^L$ has at least $n-s+1$ non-negative eigenvalues (thus at most $s-1$ negative eigenvalues), so  $x\notin X_c(j,h^L_\chi)$. We obtain for $j\geq s+q-1$ ($s+q-1>s-1$ and $>q-1$),
	\begin{equation}
		 X_c(j,h^L_\chi)\subset X_{u_0}.
	\end{equation}
However, by $\chi=0$ on $(-\infty,u_0)$, we have $h^L_\chi=h^L$ on $X_{u_0}$. Thus $X_c(j,h^L_\chi)=X_{u_0}(j,h_\chi^L)=X_{u_0}(j,h^L)=X_c(j,h^L)\setminus (X_c\setminus X_{u_0})(j,h^L)=X_c(j,h^L)$ for $j\geq s+q-1$. It follows that
\begin{equation}
 \int_{X_c(j,h^L_\chi)}(-1)^j c_1(L,h_\chi^L)^n
= \int_{X_c(j,h^L)}(-1)^j c_1(L,h^L)^n
\end{equation}
for $q+s-1\leq j\leq n$.	 Finally, by (\ref{eq_h_xc}), it follows that for $q+s-1\leq j\leq n$, we have
	\begin{equation}\label{eq_q_xc}
	\begin{split}
	\limsup_{k\rightarrow \infty}n!k^{-n}\dim H_{(2)}^{0,j}({X_c},{L}^k)
	\leq\int_{X_c(j,h^L)}(-1)^j c_1(L,h^L)^n= \int_{M(j)}(-1)^j c_1(L,h^L)^n.
	\end{split}
	\end{equation}
    Here the last equality is from that $X_c(j,h^L)\subset M(j,h^L)\subset X_c(j,h^L)$.
	 \cite[Theorem 3.5.6 (Hörmander),
	Theorem 3.5.7 (Andreotti-Grauert)(i), Theorem B.4.4 (Dolbeault isomorphism)]{MM07} implies, for $j\geq q$,
	$H^j(X,L^k)\cong H^j(X_v,L^k)\cong
	H^{0,j}(X_v,L^k)\cong H_{(2)}^{0,j}(X_c,L^k)$ and we apply (\ref{eq_q_xc}).
\qed

	The weak Morse inequalities for $q$-convex manifolds we treated here
	 can be viewed as asymptotic vanishing theorem for $H^q(X,L^k)$  as $k\rightarrow\infty$ when $L$ has nowhere $q$ negative and $n-q$ positive eigenvalues. It is beyond the scope of classical Bochner technique and provides explicit upper bound.

\subsection{Pseudoconvex domains}

	Let $M$ be a relatively compact domain with smooth boundary $bM$ in a complex manifold $X$. Let $\rho\in \cC^\infty(X,\R)$ such that $M=\{ x\in X: \rho(x)<0 \}$ and $d\rho\neq 0$ on $bM=\{x\in X: \rho(x)=0\}$. We denote the closure of $M$ by $\overline{M}=M\cup bM$. We say that $\rho$ is a defining function of $M$. Let $T^{(1,0)}bM:=\{ v\in T^{(1,0)}X: \dbar\rho(v)=0 \}$ be the analytic tangent bundle to $bM$. The Levi form of $\rho$ is the $2$-form $\cL_\rho:=\dbar\ddbar\rho\in \cC^\infty(bM, T^{(1,0)*}bM\otimes T^{(0,1)*}bM)$.	
	A relatively compact domain $M$ with smooth boundary $bM$ in a complex manifold $X$ is called pseudoconvex if the Levi form $\cL_\rho$ is semi-positive definite.

\noindent\textbf{Proof of Theorem \ref{thm_psc}:}
	Let $\omega$ be a Hermitian metric on $X$. Let $\rho\in \cC^\infty(X,\R)$ be a defining function of $M$ such that $M=\{x\in X:\rho(x)< 0  \}$ with $|d\rho|=1$ on the boundary $bM$. Let $x\in bM$. Let $\{ w_j \}_{j=1}^n$ be a local orthonormal frame of $T^{(1,0)}X$ with dual frame $\{ w^j\}_{j=1}^n$ of $T^{(1,0)*}X$ around $x$, such that  $T^{(1,0)}_xbM$ is generated by $\{w_2,\cdots,w_n\}$ and $(\dbar\ddbar\rho)(w_k,\ov w_j)_x=\delta_{kj}a_j(x)$ for $j, k\geq 2$. Thus $a_j(x)\geq 0$ for each $2\leq j\leq n$ as $M$ is pseudoconvex. For $s\in B^{0,q}(M,L^k\otimes E)=\{ s\in \Omega^{0,q}(\ov M,L^k\otimes E): i_{e_{\bn}^{(0,1)}}s =0~\mbox{on}~bM \}$, we have $i_{\ov w_1}s(x)=0$.
	So $ \cL_\rho(s,s)(x)
	=\sum_{j,k=2}^n(\dbar\ddbar\rho)(w_k,\ov{w}_j)\langle \ov{w}^j\wedge i_{\ov{w}_k}s(x),s(x)\rangle_h
	=\sum_{j=2}^n a_j(x) \langle \ov w^j\wedge i_{\ov w_j} s(x),s(x)\rangle_h
	\geq 0$, it follows that
	\begin{equation}
	\int_{bM} \cL_\rho(s,s)dv_{bM}\geq 0.
	\end{equation}
	Let $X_c:=\{ x\in X:\rho(x)<c \}$ for $c\in \R$.
	We fix $u<0<v$ such that $L>0$ on a open neighbourhood of $X_v\setminus \ov{X}_u$, then there exists $C_L>0$ such that for any $q\geq 1$ and any holomorphic Hermitian vector bundle $(F,h^F)$ on $X$,
	\begin{equation}
	\langle R^L(w_l,\ol{w}_k)\ol{w}^k\wedge i_{\ol{w}_l}s,s \rangle_h\geq C_L|s|^2, \quad s\in \Omega^{0,q}_0(X_v\setminus \ov{X}_u,F).
	\end{equation}
	As in (\ref{eq_BKN}), there exist $C_4\geq 0$ and $C_5\geq 0$ such that for any $s\in B^{0,q}(M,L^k\otimes E)$ with $\supp (s)\in X_v\setminus\ov{X}_u$ and $q\geq 1$,
	\begin{equation}
	\begin{split}
	\frac{3}{2}(||\ddbar^E_k s||^2+||\ddbar^{E*}_k s||^2)
	&\geq \langle  R^{L^k\otimes E\otimes K^*_X}(w_j,\ov{w}_k)\ov{w}^k\wedge i_{\ov{w}_j} s,s\rangle+\int_{bM}\cL_{\rho}(s,s)dv_{bM}-C_4||s||^2\\
	&\geq  \int_M(kC_L-C_5-C_4)|s|^2dv_X.
	\end{split}
	\end{equation}
	For any $k\geq k_0:=[2\frac{C_4+C_5}{C_L}]+1$, we have $C_L-\frac{C_4+C_5}{k}\geq \frac{1}{2}C_L$. Let $C_2:=\frac{3}{C_L}$. As in Lemma \ref{keylem}, for any $s\in B^{0,q}(M,L^k\otimes E)$ with $\supp(s)\subset X_v\setminus \overline{X}_u$, $q\geq 1$ and $k\geq k_0>0$, we have
	\begin{equation}
	||s||^2\leq \frac{C_2}{k}( ||\ddbar^E_k s||^2+||\ddbar^{E*}_k s||^2 ),
	\end{equation}
	where the $L^2$-norm $||\cdot||$ is given by $\omega$, $h^{L^k}$ and $h^E$ on $M$.
	Along the same argument in Lemma \ref{1coxFE}, we conclude that
	there exist a compact subset $K'\subset M$ (Indeed, $K':=\ov{\{ \rho<-\epsilon \}}$ for some $\epsilon>0$ such that $\{ -\epsilon<\rho<\epsilon \}\Subset X_v\setminus \ov{X}_u$ as in Lemma \ref{K'lem}) and $C_0>0$ such that for sufficiently large $k$, we have
	\begin{equation}
	(1-\frac{C_0}{k})||s||^2\leq \frac{C_0}{k}(||\ddbar^E_ks||^2+||\ddbar^{E*}_{k}s||^2)+\int_{K'} |s|^2 dv_X
	\end{equation}
	for any $s\in \Dom(\ddbar^E_k)\cap \Dom(\ddbar^{E*}_{k})\cap L^2_{0,q}(M,L^k\otimes E)$ and each $1\leq q \leq n$,
	where the $L^2$-norm is given by $\omega$, $h^{L^k}$ and $h^E$ on $M$.
	Finally, we can apply Proposition \ref{prop_main} on $M$ and note $K'\subset M$.
\qed

\subsection{Weakly $1$-complete manifolds}

	A complex manifold $X$ is called weakly $1$-complete if there exists a smooth plurisubharmonic function $\varphi\in \cC^\infty(X,\R)$ such that $\{x\in X: \varphi(x)<c\}\Subset X$ for any $c\in \R$. $\varphi$ is called an exhaustion function.

\begin{corollary}
	Let $X$ be a weakly $1$-complete manifold of dimension $n$. Let $(L,h^L)$ and $(E,h^E)$ be holomorphic Hermitian line bundles on $X$. Let $(L,h^L)$ be positive on $X\setminus K$ for a compact subset $K\subset X$.
	Then for any $q\geq 1$, we have
	\begin{equation}
	\limsup_{k\rightarrow \infty}n!k^{-n}\dim H^q(X,L^k\otimes E)\leq \int_{K(q)}(-1)^q c_1(L,h^L)^n.
	\end{equation}
\end{corollary}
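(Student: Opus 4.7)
The plan is to combine Theorem \ref{thm_w1c} with a Dolbeault/$L^2$-cohomology identification. Because positivity and Griffith $1$-positivity coincide for line bundles, the assumption $L>0$ on $X\setminus K$ is exactly the $q=1$ case of the Griffith positivity hypothesis of Theorem \ref{thm_w1c}. Fixing any sublevel set $X_c := \{\varphi < c\}$ with $K \subset X_c$, Theorem \ref{thm_w1c} (and $\rank(E)=1$) yields, for every $j \geq 1$,
\begin{equation*}
\limsup_{k\to\infty} n! k^{-n} \dim H^j_{(2)}(X_c, L^k \otimes E) \leq \int_{K(j)} (-1)^j c_1(L,h^L)^n.
\end{equation*}

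To upgrade this bound from the $L^2$-cohomology on the sublevel set to the sheaf cohomology on all of $X$, I would chain three identifications. First, the classical Dolbeault isomorphism gives $H^q(X, L^k\otimes E) \cong H^{0,q}_{\ddbar}(X, L^k\otimes E)$. Second, the restriction map $H^{0,q}_{\ddbar}(X, L^k\otimes E) \to H^{0,q}_{\ddbar}(X_c, L^k\otimes E)$ is an isomorphism for $k$ sufficiently large, via an Andreotti-Grauert-Takegoshi style argument exploiting the strict positivity of $L$ outside $K$ and the exhaustion by $\varphi$. Third, the optimal fundamental estimate established for the modified metric $h^L_\chi = h^L e^{-\chi(\varphi)}$ in the proof of Theorem \ref{thm_w1c} identifies $H^{0,q}_{\ddbar}(X_c, L^k\otimes E)$ with $H^q_{(2)}(X_c, L^k\otimes E)$, as recorded in Section \ref{Sec_pre} (via $\ov{H}^{0,q}_{(2)} \cong \cH^{0,q} \cong H^{0,q}_{(2)}$ once the fundamental estimate holds).

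The main obstacle is the middle step, the comparison of Dolbeault cohomology on $X$ with that on $X_c$. On a weakly $1$-complete manifold carrying a line bundle positive outside a compact set, this reduces to solving $\ddbar$-equations on neighborhoods of $X \setminus X_{c'}$ (for suitable $c' < c$) with values in $L^k\otimes E$; the solvability follows from H\"ormander's $L^2$-estimates applied with a plurisubharmonic weight built from $\varphi$ and $\chi$, using that the curvature of $h^L_\chi$ is strictly positive on $X\setminus K$. Granted this identification, the three isomorphisms transport the Theorem \ref{thm_w1c} bound directly to $H^q(X, L^k\otimes E)$, completing the proof.
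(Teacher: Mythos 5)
Your proposal is correct in outline and follows the same two-step strategy as the paper: bound the $L^2$-cohomology of a sublevel set $X_c\supset K$, then transport the bound to $H^q(X,L^k\otimes E)$. Two differences are worth flagging. First, the paper does not route through Theorem \ref{thm_w1c}: it chooses a regular value $c$ with $K\subset X_c$, notes that $X_c$ is then a smooth pseudoconvex domain with $L>0$ in a neighbourhood of $bX_c$, and applies Theorem \ref{thm_psc} directly (the corollary is in fact proved before Theorem \ref{thm_w1c}). Your route via the $q=1$ case of Theorem \ref{thm_w1c} is equally legitimate and gives the $\int_{K(q)}$ bound without having to observe that $X_c(q)=K(q)$; also, no modified metric $h^L_\chi$ enters either argument here (the $\chi$-twist is only used in the $q$-convex case), so that ingredient should be dropped or else you must check the curvature integral is still computed for $h^L$. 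Second, and more substantively, your third identification is misattributed: the fundamental estimate only yields $\ov{H}^{0,q}_{(2)}(X_c,\cdot)\cong H^{0,q}_{(2)}(X_c,\cdot)\cong\cH^{0,q}(X_c,\cdot)$, i.e.\ it relates reduced and non-reduced $L^2$-cohomology to the harmonic space; it says nothing about the smooth Dolbeault cohomology $H^{0,q}_{\ddbar}(X_c,\cdot)$ of the open manifold. The full chain $H^{q}(X,L^k\otimes E)\cong H^q(X_c,L^k\otimes E)\cong H^{0,q}(X_c,L^k\otimes E)\cong\cH^{0,q}(X_c,L^k\otimes E)$ for $q\geq 1$ and $k$ large --- that is, your second and third steps together --- is precisely Takegoshi's theorem \cite[Theorem 6.2]{Takegoshi:83} (see also \cite[Theorem 3.5.11]{MM07}), which the paper cites wholesale rather than re-deriving from H\"ormander-type estimates. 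Your sketch of that step has the right flavour, but it is the genuinely nontrivial input and should be cited or proved in full rather than folded into the fundamental estimate.
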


\begin{proof}
	Let $\varphi\in \cC^\infty(X,\R)$ be an exhaustion function of $X$ such that $\sqrt{-1}\dbar\ddbar\varphi\geq 0$ on $X$ and $X_c:=\{\varphi<c\}\Subset X$ for all $c\in \R$. We choose  a regular value $c\in \R$ of $\varphi$ such that $K\subset X_c$ by  Sard's theorem. Thus $X_c$ is a smooth pseudoconvex domain and $L>0$ on a neighbourhood of $bX_c$. We apply Theorem \ref{thm_psc} on $X_c$.
	Finally, we use
	$
	H^{q}(X,L^k\otimes E)\cong H^q(X_c,L^k\otimes E)\cong H^{0,q}(X_c,L^k\otimes E)\cong\cH^{0,q}(X_c,L^k\otimes E)
	$ for $q\geq 1$ and sufficiently large $k$ by \cite[Theorem 6.2]{Takegoshi:83} (see also \cite[Theorem 3.5.11]{MM07}).
\end{proof}

More generally, we reprove the main result in \cite{M:92} without complete metric. Recall $(L,h^L)$ is called Griffiths $q$-positive at $x\in X$, if the curvature form $R_x^{L}$ has at least $n-q+1$
positive eigenvalues.

\noindent\textbf{Proof of Theorem \ref{thm_w1c}:}
	Firstly,
	let $\varphi\in \cC^\infty(X,\R)$ be an exhaustion function of $X$ such that $\sqrt{-1}\dbar\ddbar\varphi\geq 0$ on $X$ and $X_r:=\{\varphi<r\}\Subset X$ for all $r\in \R$. We assume $c$ is a regular value of $\varphi$ such that $K\subset X_c$. Thus $X_c$ is a smooth pseudoconvex domain. Let $\rho$ be a defining function of $X_c=M:=\{\rho(x)<0\}$ with $|d\rho|=1$ on the boundary $bM$. For $s\in B^{0,j}(bX_c,L^k\otimes E)$ with $j\geq 1$,
	\begin{equation}
		\int_{bX_c} \cL_\rho(s,s)dv_{bX_c}\geq 0.
	\end{equation}
	Secondly,
	let $u<c<v$ such that $K\subset X_{u}$ such that $L$ is Griffith $q$-positive on a neighbourhood $X_v\setminus \ov X_{u}$ of $bX_c$. As Lemma \ref{lowbd_rho_lem}, there exists a metric $\omega$ on $X$ and $C_1>0$ such that
	\begin{equation}
		\langle R^L(w_l,\ol{w}_k)\ol{w}^k\wedge i_{\ol{w}_l}s,s \rangle_h\geq C_1|s|^2, \quad s\in \Omega^{0,j}_0(X_v\setminus \overline{X}_u,L^k\otimes E), j\geq q.
	\end{equation}
	Thirdly, as \eqref{eq_posline} there exits $C_2>0$, such that for $s\in B^{0,j}(X_c,L^k\otimes E)$ with $\supp(s)\subset X_v\setminus \overline{X}_u$, $j\geq q$ and $k\geq k_0>0$, with respect to $\omega$, $h^{L}$ and $h^E$ on $X_c$,
	\begin{equation}
		||s||^2\leq \frac{C_2}{k}( ||\ddbar^E_k s||^2+||\ddbar^{E*}_k s||^2 ).
	\end{equation}
	The rest proof is as same as Theorem \eqref{thm_psc}, that is,
	$	(1-\frac{C_0}{k})||s||^2\leq \frac{C_0}{k}(||\ddbar^E_ks||^2+||\ddbar^{E*}_{k}s||^2)+\int_{K'} |s|^2 dv_X$
	for any $s\in \Dom(\ddbar^E_k)\cap \Dom(\ddbar^{E*}_{k})\cap L^2_{0,q}(X_c,L^k\otimes E)$ and each $1\leq q \leq n$,
	where the $L^2$-norm is given by $\omega$, $h^{L^k}$ and $h^E$ on $X_c$. Note that for $j\geq q$, $K(j)=K'(j)$ by the Griffith $q$-positive.
	
	In particular, if $L>0$ on $X\setminus K$, then for $j\geq 1$, we  use
	$
	H^{j}(X,L^k\otimes E)\cong\cH^{0,j}(X_c,L^k\otimes E)=H_{(2)}^{j}(X_c,L^k\otimes E)
	$ for sufficiently large $k$ by \cite[Theorem 6.2]{Takegoshi:83} (see \cite[Theorem 3.5.11]{MM07}).
\qed

\subsection{Complete manifolds}

A Hermitian manifold $(X,\omega)$ is called complete, if all geodesics are defined for all time on the underlying Riemannian manifold. If $(X,\omega)$ is complete, for arbitrary holomorphic Hermitian vector bundle $(F,h^F)$ on $X$, $\Omega_0^{0,\bullet}(X,F)$ is dense in $\Dom(\ddbar^F)$, $\Dom(\ddbar^{F*}_H)$ and $\Dom(\ddbar^F)\cap \Dom(\ddbar^{F*}_H)$ in the graph-norms of $\ddbar^F$, $\ddbar^{F*}_H$ and $\ddbar^E+\ddbar^{E*}_H$ respectively, see \cite[Lemma 3.3.1 (Andreotti-Vesentini), Corollary 3.3.3]{MM07}. Here the graph-norm is  $\|s\|+\|Rs\|$ for $s\in \Dom(R)$.

\begin{lemma}\label{lem_complete}
	Let $(X,\omega)$ be a complete Hermitian manifold of dimension $n$. Let $(L,h^L)$ be a holomorphic Hermitian line bundle  on $X$ such that $\omega=c_1(L,h^L)$ on $X\setminus M$ for a compact subset $M$. Then there exist $C_0>0$ and $M\Subset M'$ such that for each $1\leq q\leq n$, we have for sufficiently large $k$,
	\begin{equation}
	\left( 1-\frac{C_0}{k} \right)\|s\|^2\leq \frac{C_0}{k}\left( \|\ddbar^{K_X}_k s\|^2+\|\ddbar_k^{K_X*} s\|^2 \right)+\int_{M'}|s|^2dv_X
	\end{equation}
	for $s\in\Dom(\ddbar^{K_X}_{k})\cap\Dom(\ddbar^{K_X*}_{k})\cap L^2_{n,q}(X,L^k)$.
\end{lemma}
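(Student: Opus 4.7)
The strategy is to mirror the cut-off argument of Proposition~\ref{1coxFE}, but replacing the $q$-convexity input of Lemma~\ref{keylem} by the classical Kodaira--Nakano inequality on the Kähler end of $X$. The crucial geometric observation is that $\omega=c_1(L,h^L)$ on $X\setminus M$ is automatically closed, so $(X\setminus M,\omega)$ is Kähler; consequently, in Demailly's formula \eqref{eq_bkn} the torsion pieces $\mT,\ov{\mT}$ vanish pointwise off $M$ and $\sqrt{-1}R^{L^k}=2\pi k\omega$ there. Switching once and for all from $L^2_{0,q}(X,L^k\otimes K_X)$ to the isomorphic Hilbert space $L^2_{n,q}(X,L^k)$ (under which $\ddbar^{K_X}_k$, $\ddbar^{K_X *}_k$ become the ordinary $\ddbar,\ddbar^{*}$ on $(n,q)$-forms with values in $L^k$), the Lefschetz identity $[\omega,\Lambda]=q\cdot\mathrm{id}$ on $(n,q)$-forms collapses the curvature term of \eqref{eq_bkn} to the scalar $2\pi k q$. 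Completeness of $(X,\omega)$ together with the Andreotti--Vesentini density theorem then lets me assume throughout that $s\in\Omega^{n,q}_0(X,L^k)$.

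Pick a relatively compact open neighbourhood $M\Subset M'\Subset X$ and a cut-off $\phi\in\cC^\infty(X,[0,1])$ with $\phi\equiv 0$ in a neighbourhood of $M$ and $\phi\equiv 1$ on $X\setminus M'$, and set $C_1:=\sup_X|d\phi|^2<\infty$. Since $\supp(\phi s)\subset X\setminus M$ is entirely contained in the Kähler locus, the Bochner--Kodaira--Nakano formula degenerates on that support to $\square=\ov{\square}+[\sqrt{-1}R^{L^k},\Lambda]$, and integration against $\phi s$ produces the Kähler--Nakano bound
\begin{equation*}
\|\ddbar(\phi s)\|^2+\|\ddbar^{*}(\phi s)\|^2\;\geq\;\big\langle[\sqrt{-1}R^{L^k},\Lambda]\phi s,\phi s\big\rangle=2\pi k q\,\|\phi s\|^2.
\end{equation*}

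From there the estimate assembles exactly as in \eqref{cuteq_k}: the identities $\ddbar(\phi s)=\ddbar\phi\wedge s+\phi\,\ddbar s$ and $\ddbar^{*}(\phi s)=\phi\,\ddbar^{*}s-i_{(\ddbar\phi)^{\sharp}}s$ yield the cut-off error bound $\|\ddbar(\phi s)\|^2+\|\ddbar^{*}(\phi s)\|^2\leq 2(\|\ddbar s\|^2+\|\ddbar^{*}s\|^2)+4C_1\|s\|^2$, while $\phi\equiv 1$ off $M'$ gives $\|s\|^2\leq\|\phi s\|^2+\int_{M'}|s|^2\,dv_X$. Chaining these three inequalities and dividing by $2\pi k q\geq 2\pi k$ produces the desired estimate with, for example, $C_0:=\max\{1/\pi,\,2C_1/\pi\}$, uniformly in $1\leq q\leq n$. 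The only genuine subtlety---and the step I expect to need the most care---is legitimising the use of the Kähler--Nakano identity when $\omega$ is Kähler only on $X\setminus M$; this is valid precisely because $\phi s$ vanishes on an open neighbourhood of $M$, so both the collapse of the torsion terms and the Lefschetz identity apply pointwise on $\supp(\phi s)$, and no global Kähler hypothesis is ever invoked.
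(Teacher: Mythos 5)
Your proposal is correct and follows essentially the same route as the paper: the Kählerness of $\omega=c_1(L,h^L)$ off $M$ kills the torsion terms in \eqref{eq_bkn}, the Lefschetz identity on $(n,q)$-forms with values in $L^k$ reduces the curvature term to $qk$ (up to normalization), and the completeness/density plus cut-off argument of Proposition \ref{1coxFE} assembles the estimate. Your constants in the cut-off error bound are slightly sharper ($2$ and $4C_1$ versus the paper's $5$ and $12C_1$), and you keep the factor $2\pi$ that the paper silently absorbs, but neither difference changes the argument.
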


\begin{proof}
	Since $(X,\omega)$ is complete, for arbitrary holomorphic Hermitian vector bundle $(E,h^E)$ the Hilbert space adjoint and the maximal extension of the formal adjoint of $\ddbar^E_k$ coincide, $\ddbar^{E*}_{k,H}=\ddbar^{E*}_k$.  Let $ \Lambda$ be the adjoint of the operator $\omega\wedge\cdot$ with respect to the Hermitian inner product induced by $\omega$ and $h^L$. In a local orthonormal frame $\{ \omega_j \}_{j=1}^n$ of $T^{(1,0)}X$ with dual frame $\{ w^j\}_{j=1}^n$ of $T^{(1,0)*}X$, $\omega=\sqrt{-1}\sum_{j=1}^n \omega^j\wedge\ov\omega^j$ and $\Lambda=-\sqrt{-1}i_{\ov w_j}i_{w_j}$. Thus $\sqrt{-1}R^{(L,h^L)}=\sqrt{-1}\sum_{j=1}^n \omega^j\wedge\ov\omega^j$ outside $M$.
	Let $\{e_k\}$ be a local frame of $L^k$. For $s\in \Omega^{n,q}_0(X\setminus M,L^k)$, we can write $s=\sum_{|J|=q} s_J\omega^1\wedge\cdots\wedge\omega^n\wedge\ov\omega^J\otimes e_k$ locally, thus
	\begin{equation}
	[\sqrt{-1}R^L,\Lambda]s
	=\sum_{|J|=q}(q s_J\omega^1\wedge\cdots\wedge\omega^n\wedge\ov\omega^J)\otimes e_k
	= qs.
	\end{equation}
	Since $(X\setminus M, \sqrt{-1}R^{(L,h^L)})$ is K\"{a}hler, we apply \eqref{eq_bkn} for $s\in \Omega^{n,q}_0(X\setminus M,L^k)$ with $1\leq q\leq n$, and obtain
	$\|\ddbar_k s\|^2+\|\ddbar^{*}_k s\|^2=
	\langle \square^{L^k}s,s \rangle
	\geq k \langle  [\sqrt{-1} R^L,\Lambda]s,s \rangle\geq  qk\|s\|^2\geq k\|s\|^2$.
	So we have
	\begin{equation}
	\|s\|^2\leq \frac{1}{k}( \|\ddbar_k s\|^2+\|\ddbar^{*}_k s\|^2 ).
	\end{equation}
	
	Next we follow the analogue argument in Proposition \ref{1coxFE}. Let $V$ and $U$ be open subsets of $X$ such that $M\subset V\Subset U\Subset X$. We choose a function $\xi\in \cC^\infty_0(U,\R)$ such that $0\leq \xi\leq 1$ and $\xi\equiv 1$ on $\ov V$. We set $\phi:=1-\xi$, thus $\phi\in \cC^\infty(X,\R)$ satisfying $0\leq \phi \leq 1$ and $\phi \equiv 0$ on $\ov V$.
	Now let $s\in \Omega_0^{n,q}(X,L^k)$, thus $\phi s\in \Omega^{n,q}_0(X\setminus M, L^k)$. We set $M':=\ov U$, then
	\begin{equation}
	\|\phi s\|^2\geq \|s\|^2-\int_{K'}|s|^2dv_X,
	\end{equation}
	and there exists a constant $C_1>0$ such that	
	\begin{equation}
	\frac{1}{k}(\|\ddbar_k (\phi s)\|^2+\|\ddbar^{*}_k(\phi s)\|^2)\leq \frac{5}{k}(\|\ddbar_k s\|^2+\|\ddbar^{*}_k s\|^2)+\frac{12C_1}{k}\|s\|^2.
	\end{equation}		
	By combining the above three inequalities, there exists $C_0>0$ such that for any $s\in \Omega^{n,q}_0(X,L^k)=\Omega^{0,q}_0(X,L^k\otimes K_X)$ with $1\leq q\leq n$ with $k$ large enough,
	\begin{equation}
	(1-\frac{12C_1}{k})\|s\|^2\leq \frac{5}{k}(\|\ddbar_k s\|^2+\|\ddbar^{*}_k s\|^2)+\int_{M'}|s|^2dv_X.
	\end{equation}
	Finally choose $C_0=\max\{ 12C_1, 5 \}$. The assertion follows from the fact that $\Omega_0^{0,\bullet}(X,L^k\otimes K_X)$ is dense in $\Dom(\ddbar_k^{K_X})\cap \Dom(\ddbar_k^{K_X*})$ in the graph-norm.
\end{proof}

\noindent\textbf{Theorem \ref{thm_complete}:} Let $q\geq 1$.
	From Lemma \ref{lem_complete},
	the optimal fundamental estimate holds in bidegree $(0,q)$ for forms with values in $L^k\otimes K_X$ for $k$ large,  then use Proposition \ref{prop_main} and note that $M(q)=M'(q)$.
\qed

\subsection{Covering manifolds}
Let $(\widetilde{X}, J)$ be a complex manifold of dimension $n$ with a compatible Riemannian metric $g^{T\widetilde{X}}$. Let $\widetilde \omega$ be the associated real $(1,1)$-form defined by $\widetilde \omega(X,Y)=g^{T\widetilde{X}}(J X,Y)$ on ${T\widetilde{X}}$.
A group $\Gamma$ is called a discrete group acting holomorphically, freely and properly on $\widetilde{X}$,
if $\Gamma$ is equipped with the discrete topology such that (1) the map  $\Gamma \times \widetilde{X} \rightarrow \widetilde{X}, (r, \widetilde{x})\mapsto r.\widetilde{x}$ is holomorphic; (2) $r.\widetilde{x}=\widetilde{x}$ for some $\widetilde{x} \in \widetilde{X}$ implies that $r=e$ the unit element of $\Gamma$; and (3) the map  $\Gamma \times \widetilde{X} \rightarrow \widetilde{X}\times \widetilde{X}, (r,\widetilde{x})\rightarrow (r.\widetilde{x},\widetilde{x})$ is proper.
A Riemannian metric $g^{T\widetilde{X}}$ (or $\widetilde{\omega}$) is called $\Gamma$-equivariant, if the map $r:\widetilde{X}\rightarrow \widetilde{X}$ is isometric with respect to $g^{T\widetilde{X}}$
for every $r\in \Gamma$.

We say a Hermitian manifold $(\widetilde{X},\widetilde{\omega})$ is a covering manifold, if there exists a discrete group $\Gamma$ acting holomorpically, freely and properly on $\widetilde{X}$ such that $\widetilde{\omega}$ is $\Gamma$-equivariant and the quotient $X:=\widetilde{X}/\Gamma$ is compact. An relatively compact open subset $U \Subset \widetilde{X} $ is called a fundamental domain of the action $\Gamma$ on a covering manifold $ \widetilde{X}$, if the following conditions are satisfied: (a) $ \widetilde{X}=\cup_{r\in\Gamma}r(\overline{U}) $; (b) $r_1(U)\cap r_2(U)$ is empty for $ r_1,r_2 \in \Gamma$ with $r_1\neq r_2$; and (c) $\overline{U}\setminus U$ has zero measure.
The fundamental domain exists, and it can be constructed (e.g. \cite[3.6]{MM07}).
A holomorphic Hermitian vector bundle $(\widetilde{F},h^{\widetilde{F}})$ over $\widetilde{X}$ is called $\Gamma$-invariant, if there is a map $r_{\widetilde{F}}:{\widetilde{F}}\rightarrow {\widetilde{F}}$ associated to $r\in \Gamma$, which commutes with the fibre projection $\pi:{\widetilde{F}}\rightarrow \widetilde{X}$ (i.e., $r\circ\pi =\pi\circ r_{\widetilde{F}}$), such that $h^{\widetilde{F}}(v,w)=h^{\widetilde{F}}(r_{{\widetilde{F}}}v,r_{{\widetilde{F}}}w)$ for $v,w \in \widetilde{F}$.

\noindent\textbf{Proof of Theorem \ref{thm_cover}:}
Let $U\Subset \widetilde X$ be the fundamental domain of the action $\Gamma$. Due to the construction of $U$, we have a biholomorphic map $\pi_\Gamma|_U: U\rightarrow X\setminus Z$,
$\pi_{\Gamma}(U)=X\setminus Z$
with a zero measure subset $Z\subset X$. In fact, we choose a finite open cover $\{ U_j \}_{j=1}^N$ of $X$ such that, there exist open subsets $\{ \widetilde{U}_j \}_{j=1}^N$ in $\widetilde{X}$ satisfying $\pi_{\Gamma}: \widetilde{U}_j\rightarrow U_j$ is biholomorphic. Define $W_1=U_1, W_2=U_2\setminus \overline{U}_1,\cdots,W_j=U_j\setminus (\overline{U}_1\cup\cdots\cup \overline{U}_{j-1}),\cdots,W_N=U_N\setminus (\overline{U}_1\cup\cdots\cup \overline{U}_{N-1})$. The fundamental domain $U$ is given by $U=\cup_{j=1}^N \pi_{\Gamma}^{-1}(W_j)$. Thus $\pi_{\Gamma}(U)=\cup_{j=1}^N W_j=X\setminus Z$, where $Z\subset \cup_{j=1}^N \partial U_j$ is of measure zero.

Let $\{\widetilde{s}^k_j\}_{j}$ be an orthonormal basis of $\cH^{0,q}(\widetilde{X},\widetilde{L}^k)$ and let $\widetilde{B}^{q}_k$ be
the Bergman kernel function defined by
$\widetilde{B}^{q}_k(\widetilde{x})=\sum_{j}|s^k_j(\widetilde{x})|,
\;\widetilde{x}\in \widetilde{X}$,
where $|\cdot|:=|\cdot|_{\widetilde{h}_k,\widetilde{\omega}}$ is the point-wise Hermitian norm. By \cite[Lemma 3.6.2]{MM07}, we have for each $0\leq q\leq n$,
\begin{equation}
\dim_{\Gamma}\cH^{0,q}(\widetilde{X},\widetilde{L}^k)=\sum_{j}\int_U |s^k_j(\widetilde{x})|^2 dv_{\widetilde{X}}(\widetilde{x})= \int_U \sum_{j} |s^k_j(\widetilde{x})|^2 dv_{\widetilde{X}}(\widetilde{x})=\int_U \widetilde{B}^{q}_k(\widetilde{x}) dv_{\widetilde{X}}(\widetilde{x}),
\end{equation}
where $dv_{\widetilde{X}}=\widetilde{\omega}^n/n!$. By applying (\ref{eq_berman_mz}) on $\widetilde{X}$, we have for each $0\leq q\leq n$,
\begin{equation}
\limsup_{k\rightarrow \infty}k^{-n}\widetilde{B}^{q,k}(\widetilde{x})\leq (-1)^q 1_{\widetilde{X}(q)}\frac{c_1(\widetilde{L},h^{\widetilde{L}})^n}{\widetilde{\omega}^n}(\widetilde{x}),\quad \widetilde{x}\in\widetilde{X}.
\end{equation}
 Analogue to (\ref{eq_fatou}), by Fatou's lemma and the fact that $\overline{U}\subset \widetilde{X}$ is compact with finite volume,
 \begin{equation}
 \limsup_{k\rightarrow \infty} \int_U k^{-n} \widetilde{B}^{q}_k(\widetilde{x}) dv_{\widetilde{X}}(\widetilde{x})
 \leq \int_U \limsup_{k\rightarrow \infty}k^{-n}\widetilde{B}^{q}_k(\widetilde{x})dv_{\widetilde{X}}(\widetilde{x}).
 \end{equation}
   Finally the weak Morse inequalities on covering manifolds follows by
\begin{equation}\nonumber
\begin{split}
&\limsup_{k\rightarrow \infty}k^{-n}\dim_{\Gamma}\cH^{0,q}(\widetilde{X},\widetilde{L}^k)
= \limsup_{k\rightarrow \infty}k^{-n} \int_U \widetilde{B}^{q}_k(\widetilde{x}) dv_{\widetilde{X}}(\widetilde{x})
\leq \int_U \limsup_{k\rightarrow \infty}k^{-n}\widetilde{B}^{q}_k(\widetilde{x})dv_{\widetilde{X}}(\widetilde{x})\\
&\leq \frac{1}{n!}\int_U(-1)^q 1_{\widetilde{X}(q)}c_1(\widetilde{L},h^{\widetilde{L}})^n=\frac{1}{n!}\int_{U(q)}(-1)^q c_1(\widetilde{L},h^{\widetilde{L}})^n
=\frac{1}{n!}\int_{(\pi_{\Gamma}^{-1}(X\setminus Z))(q)}(-1)^q \pi_\Gamma^*c_1(L,h^L)^n\\&
=\frac{1}{n!}\int_{(X\setminus Z)(q)}(-1)^q c_1(L,h^L)^n=\frac{1}{n!}\int_{X(q)}(-1)^q c_1(L,h^L)^n.
\end{split}
\end{equation}
\qed

\section{Asymptotics of spectral function of lower energy forms}\label{Sec_specfct}

In the previous section we only considered the kernel space of Kodaira Laplacian, the strength of the optimal fundamental estimate has not been fully exploited. Based on the in-depth analysis of the spectral function of Kodaira Laplacian \cite{HM:14}, we present further remarks in this section.
Let $(X,\omega)$ be a Hermitian manifold of dimension $n$ and let $(L,h^L)$ and $(E,h^E)$ be holomorphic Hermitian line bundles on $X$. Let $\square^E_k$ be the Gaffney extension of Kodaira Laplacian. Let $0\leq q\leq n$.
Let $E^q_{\leq k^{-N_0}}(\square^E_k): L^2_{0,q}(X,L^k\otimes E)\rightarrow \cE^q(k^{-N_0},\square^E_k):=\Im E^q_{\leq k^{-N_0}}(\square^E_k)$ be the spectral projection of $\square^E_k$. Let $\{s_j\}_{j\geq 1}$ be an orthonormal frame of $\cE^q(k^{-N_0},\square_k^E)$ and
let $B^q_{\leq k^{-N_0}}(x):=\sum_{j}|s_j(x)|^2_h$ be the spectral function. We denote the spectrum counting function of $\square^E_k$ by
$N^q(k^{-N_0},\square_k^E):=\dim \cE^q(k^{-N_0},\square_k^E).$
By the spectral theorem,
\begin{equation}
\cH^{0,q}(X,L^k\otimes E)=\cE^q(0,\square_k^E)\subset\cE^q(k^{-N_0},\square_k^E)\subset \Dom(\square_k^E)\cap L^2_{0,q}(X,L^k\otimes E).
\end{equation}

\begin{theorem}[{\cite[Corollary 1.4]{HM:14}}]
	Let $N_0\geq 2n+1$ and $0\leq q\leq n$. The spectral function
	of the Kodaira Laplacian has the following asymptotic bahaviour:
	\begin{equation}
	\begin{split}
	\limsup_{k\rightarrow \infty}k^{-n}B^q_{\leq k^{-N_0}}(x)= (-1)^q 1_{{X}(q)}\frac{c_1({L},h^{{L}})^n}{{\omega}^n}({x}).
	\end{split}
	\end{equation}
	For any compact subset $K\subset X$, there exist $C>0$ and $k_0>0$ such that for any $k\geq k_0$ and any $x\in K$,
	\begin{equation}
	k^{-n}B^q_{\leq k^{-N_0}}(x)\leq C.
	\end{equation}
\end{theorem}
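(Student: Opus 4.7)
The plan is to upgrade the pointwise Bergman kernel asymptotics of Theorem 2.5 (Berman, Hsiao--Marinescu) from the harmonic subspace $\cH^{0,q}(X,L^k\otimes E)$ to the larger low-energy spectral subspace $\cE^q(k^{-N_0},\square_k^E)$. Since $\cH^{0,q}(X,L^k\otimes E)=\cE^q(0,\square_k^E)$ is the eigenspace at eigenvalue $0$, whereas $\cE^q(k^{-N_0},\square_k^E)$ contains all eigenvectors with eigenvalue up to $k^{-N_0}$, the entire point is to show that these extra eigenvalues below the threshold do not contribute additional mass at the semiclassical scale $k^{-1/2}$.

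I would first reduce to a local computation near a fixed $x_0\in X$. Writing $E^q_{\leq k^{-N_0}}(\square_k^E)$ via the spectral theorem as an integral of the wave operator $\cos(t\sqrt{\square_k^E})$ against a smooth cutoff, and invoking the finite propagation speed of the wave operator associated with the Gaffney extension, the Schwartz kernel of $E^q_{\leq k^{-N_0}}(\square_k^E)$ at $(x_0,x_0)$ depends only on the geometry in an arbitrarily small ball around $x_0$ modulo $O(k^{-\infty})$ errors. In a normal holomorphic chart with a local trivialization $e^{-\phi}$ of $L$ where $\phi(z)=\pi\langle \dot R^L_{x_0}z,\bar z\rangle+O(|z|^3)$, I would then rescale $z=Z/\sqrt{k}$ to conjugate $k^{-1}\square_k^E$ into an operator $\square_{(k)}$ on a ball of radius $O(\sqrt{k})$ in $\C^n$ that converges, in a quantitative $C^m$ sense, to a model Kodaira Laplacian $\square^\infty$ on $\C^n$ with Gaussian weight determined by $\dot R^L_{x_0}$.

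The model operator $\square^\infty|_{(0,q)}$ has discrete spectrum with a positive spectral gap above $0$, and $\ker(\square^\infty|_{(0,q)})\neq 0$ precisely when $x_0\in X(q)$, in which case the diagonal of its $L^2$-projection equals $|\det(\dot R^L_{x_0}/2\pi)|$, i.e.\ $(-1)^q c_1(L,h^L)^n/\omega^n(x_0)$. Once $k$ is so large that $k^{-N_0}$ undercuts the rescaled spectral gap, the localized spectral projector $E^q_{\leq k^{-N_0}}(\square_k^E)$ converges after rescaling to the $L^2$-projector onto $\ker(\square^\infty|_{(0,q)})$, and evaluation on the diagonal yields the claimed limit. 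For the uniform upper bound on compact subsets, I would combine G\aa rding inequalities for $\square_k^E$ with Sobolev embedding on the $2n$-dimensional underlying manifold: iterated application of $(\square_k^E)^m$, together with $\|(\square_k^E)^m s\|\leq k^{-mN_0}\|s\|$ for $s\in \cE^q(k^{-N_0},\square_k^E)$, produces $H^{n+1}$ control that converts via Sobolev to $|s(x_0)|^2\leq Ck^n$. The exponent $N_0\geq 2n+1$ is calibrated to absorb the semiclassical loss of $k^{1/2}$ per derivative across the $\sim 2n$ derivatives appearing in the chain.

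The hardest part will be the quantitative spectral-gap and off-diagonal control in the rescaling step. Establishing a uniform spectral gap for $\square_{(k)}$ requires Bochner--Kodaira--Nakano estimates on $\C^n$ for the model, together with Agmon-type exponential decay estimates ensuring that the true spectral projector is well approximated by the model projector up to $O(k^{-\infty})$. A secondary subtlety concerns the boundary behavior on $\partial X(q)$, where $\dim \ker(\square^\infty|_{(0,q)})$ can jump; these points can be handled by perturbation of the weight $\dot R^L_{x_0}$, and in any case $\partial X(q)$ has measure zero and does not affect the integrated version of the statement used in Proposition \ref{prop_main}.
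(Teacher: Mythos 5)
This statement is not proved in the paper at all: it is imported verbatim as \cite[Corollary 1.4]{HM:14}, exactly as Theorems \ref{thm_lwhmi_b} and \ref{thm_lub_b} are imported from Berman and Hsiao--Marinescu, and Section \ref{Sec_specfct} uses it strictly as a black box. So there is no internal proof to compare against; what you have written is a reconstruction of the argument of the cited reference. As an outline it identifies the right ingredients --- localization near $x_0$, the rescaling $z=Z/\sqrt{k}$, convergence to a model Kodaira Laplacian on $\C^n$ with quadratic weight determined by $\dot R^L_{x_0}$, and elliptic estimates plus Sobolev embedding for the uniform bound, with $N_0\geq 2n+1$ calibrated to absorb the semiclassical losses --- but two of your steps would not survive being made precise. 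First, finite propagation speed controls $\cos(t\sqrt{\square_k^E})$ and hence smooth functions of $\square_k^E$ with compactly supported Fourier transform; the sharp spectral projector $E^q_{\leq k^{-N_0}}(\square_k^E)$ is not such a function, and without an a priori spectral gap just above $k^{-N_0}$ (which is precisely what is not available in the semi-positive/degenerate setting this theorem is designed for) you cannot conclude $O(k^{-\infty})$ localization of its kernel this way. Hsiao--Marinescu instead exploit the extremal characterization of $B^q_{\leq k^{-N_0}}(x)$ as a supremum of $|s(x)|^2/\norm{s}^2$ over low-energy forms and prove local elliptic estimates for such forms directly, using only the inequality $\norm{(\square_k^E)^m s}\leq k^{-mN_0}\norm{s}$ that you correctly identify for the uniform bound.

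Second, your identification of the limit relies on a positive spectral gap for the model operator $\square^\infty|_{(0,q)}$, which holds only where $\dot R^L_{x_0}$ is nondegenerate. At degenerate points the theorem still makes a pointwise assertion, namely that the $\limsup$ equals $0$ (since $1_{X(q)}(x)=0$ there), and dismissing such points as a null set establishes only the integrated inequality needed in Proposition \ref{prop_main_hm}, not the stated pointwise claim; handling these points requires a separate comparison or perturbation argument that your sketch defers without supplying. If your intention is to make the paper self-contained, these are the two places where substantial work from \cite{HM:14} would have to be reproduced; otherwise the honest course, and the one the paper takes, is to cite the result.
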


As a consequence, we have the observation analogue to Proposition \ref{prop_main} as follows.
\begin{proposition}\label{prop_main_hm}
	Let $0\leq q\leq n$.
	Suppose there exist a compact subset $K\subset X$ and $C_0>0$ such that,
	for sufficiently large $k$, we have
	\begin{equation}
	\left(1-\frac{C_0}{k}\right)||s||^2\leq \frac{C_0}{k}\left(||\ddbar^E_ks||^2+||\ddbar^{E*}_{k}s||^2\right)+\int_{K} |s|^2 dv_X
	\end{equation}
	for $s\in \Dom(\ddbar^E_k)\cap \Dom(\ddbar^{E*}_{k})\cap L^2_{0,q}(M,L^k\otimes E)$.
	Then  we have
	\begin{equation}
	\limsup_{k\rightarrow \infty}n!k^{-n}N^q(k^{-N_0},\square_k^E)\leq \int_{K(q)}(-1)^q c_1(L,h^L)^n,
	\end{equation}
\end{proposition}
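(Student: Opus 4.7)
The plan is to mimic the proof of Proposition \ref{cor_l2_morse}, replacing the harmonic space $\cH^{0,q}(X,L^k\otimes E)$ by the low-energy spectral space $\cE^q(k^{-N_0},\square_k^E)$ and the Bergman kernel function $B^q_k$ by the low-energy spectral function $B^q_{\leq k^{-N_0}}$. The extra ingredient is simply that the spectral projection restricts $\langle \square_k^E s,s\rangle$ to be at most $k^{-N_0}\|s\|^2$, so the optimal fundamental estimate upgrades to a norm-comparison between the global $L^2$ norm and the $L^2$ norm on the compact set $K$.

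First, I would observe that $\cE^q(k^{-N_0},\square_k^E)\subset \Dom(\square_k^E)\subset \Dom(\ddbar_k^E)\cap \Dom(\ddbar_k^{E*})$, and by the spectral theorem every $s\in \cE^q(k^{-N_0},\square_k^E)$ satisfies
\begin{equation*}
\|\ddbar_k^E s\|^2+\|\ddbar_k^{E*}s\|^2=\langle \square_k^E s,s\rangle\leq k^{-N_0}\|s\|^2.
\end{equation*}
Substituting this into the optimal fundamental estimate yields
\begin{equation*}
\left(1-\frac{C_0}{k}-\frac{C_0}{k^{N_0+1}}\right)\|s\|^2\leq \int_K |s|^2\,dv_X,
\end{equation*}
so setting $a_k:=(1-C_0/k-C_0/k^{N_0+1})^{-1}$ gives $\|s\|^2\leq a_k\int_K|s|^2\,dv_X$ with $\limsup_k a_k=1$.

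Next I would convert this into a bound on $N^q(k^{-N_0},\square_k^E)$. For any finite orthonormal family $\{s_1,\ldots,s_m\}\subset \cE^q(k^{-N_0},\square_k^E)$, the pointwise inequality $\sum_{j=1}^m|s_j(x)|^2\leq B^q_{\leq k^{-N_0}}(x)$ together with the above gives
\begin{equation*}
m=\sum_{j=1}^m\|s_j\|^2\leq a_k\int_K \sum_{j=1}^m|s_j|^2\,dv_X\leq a_k\int_K B^q_{\leq k^{-N_0}}(x)\,dv_X.
\end{equation*}
Taking supremum over all finite-dimensional subspaces (and using the pointwise upper bound of $B^q_{\leq k^{-N_0}}$ on $K$ from the cited theorem, which already forces the right side to be finite) yields $N^q(k^{-N_0},\square_k^E)\leq a_k\int_K B^q_{\leq k^{-N_0}}\,dv_X$.

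Finally, I would apply Fatou's lemma exactly as in \eqref{eq_fatou}, using the uniform pointwise bound $k^{-n}B^q_{\leq k^{-N_0}}\leq C$ on $K$, to push $\limsup_{k\to\infty}$ inside the integral, and then invoke the pointwise asymptotic of $k^{-n}B^q_{\leq k^{-N_0}}(x)$ to obtain
\begin{equation*}
\limsup_{k\to\infty}n!k^{-n}N^q(k^{-N_0},\square_k^E)\leq \int_{K(q)}(-1)^q c_1(L,h^L)^n.
\end{equation*}
The only subtlety I foresee is the passage from finite subfamilies to the full spectral subspace (i.e.\ justifying that $N^q$ is finite and that one may take sup inside the integral); this is handled cleanly by the uniform bound on $B^q_{\leq k^{-N_0}}$ over $K$, so no new analytic input is needed beyond what the earlier sections already supply.
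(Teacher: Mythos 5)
Your proposal is correct and follows essentially the same route as the paper: both use the spectral theorem to bound $\|\ddbar_k^E s\|^2+\|\ddbar_k^{E*}s\|^2=\langle\square_k^E s,s\rangle\leq k^{-N_0}\|s\|^2$ on $\cE^q(k^{-N_0},\square_k^E)$, absorb this into the optimal fundamental estimate to get $\|s\|^2\leq a_k\int_K|s|^2\,dv_X$ with the same $a_k$, sum over an orthonormal basis to bound $N^q(k^{-N_0},\square_k^E)$ by $a_k\int_K B^q_{\leq k^{-N_0}}\,dv_X$, and finish with Fatou's lemma and the pointwise asymptotics of the spectral function. Your extra remark on passing from finite orthonormal families to the full spectral space is a harmless elaboration of the same argument.
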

\begin{proof}
	By applying Fatou's lemma to the sequence of non-negative functions $C-k^{-n}B^q_{\leq k^{-N_0}}$ and the finiteness of the volume $\int_K C dv_X<\infty$, we have
	\begin{equation}
	\limsup_{k\rightarrow \infty} \int_K k^{-n} {B}^{q}_{\leq k^{-N_0}}({x}) dv_{{X}}({x})
	\leq \int_K \limsup_{k\rightarrow \infty}k^{-n}{B}^{q}_{\leq k^{-N_0}}({x})dv_{{X}}({x}).
	\end{equation}
	From the assumptions for sufficiently large $k$, for any $s\in \cE^q(k^{-N_0},\square_k^E)\subset \Dom(\square_k^E)\cap L^2_{0,q}(X,L^k\otimes E)$,
	\begin{equation}
	(1-\frac{C_0}{k})||s||^2\leq \frac{C_0}{k}(||\ddbar^E_ks||^2+||\ddbar^{E*}_{k,H}s||^2)+\int_{K} |s|^2 dv_X\leq C_0k^{-N_0-1}\|s\|^2+\int_{K} |s|^2 dv_X.
	\end{equation}
	Set $a_k:=\frac{1}{1-C_0k^{-1}-C_0k^{-N_0-1}}$. Thus $\|s\|^2\leq a_k\int_{K} |s|^2 dv_X$. Finally, we obtain
	\begin{equation}
	\begin{split}
	&\limsup_{k\rightarrow \infty}\left(k^{-n}\dim \cE^q(k^{-N_0,\square_k^E})\right)\leq\limsup_{k\rightarrow \infty}\left(k^{-n}a_k \int_{K}B_{\leq k^{-N_0}}^q(x)dv_X(x)\right)\\
	&\leq\left(\limsup_{k\rightarrow \infty} a_k\right)\left(\limsup_{k\rightarrow \infty} \int_{K}k^{-n}B_{\leq k^{-N_0}}^q(x)dv_X(x)\right)\leq\int_{K}\limsup_{k\rightarrow \infty}k^{-n} B_{\leq k^{-N_0}}^q(x)dv_X(x)\\
	&\leq \int_{K(q)}(-1)^q \frac{c_1({L},h^{{L}})^n}{n!}
	\end{split}
	\end{equation}
\end{proof}

Analogue to Theorem \ref{thm_complete} we have:
\begin{corollary}
	Let $(X,\Theta)$ be a complete Hermitian manifold of dimension $n$. Let $(L,h^L)$ be a holomorphic Hermitian line bundle  on $X$ such that $\Theta=c_1(L,h^L)$ on $X\setminus M$ for a compact subset $M$. Then for each $1\leq q\leq n$, $N_0\geq 2n+1$, we have
	\begin{equation}
	\limsup_{k\rightarrow \infty}n!k^{-n}N^q(k^{-N_0},\square_k^{K_X})
	\leq
	\int_{M(q)}(-1)^q c_1(L,h^L)^n.
	\end{equation}
\end{corollary}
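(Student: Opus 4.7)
The plan is to mimic the proof of Theorem \ref{thm_complete} verbatim, replacing the input Proposition \ref{prop_main} by its spectral analogue Proposition \ref{prop_main_hm}. Fix $1\leq q\leq n$ and $N_0\geq 2n+1$. First I would invoke Lemma \ref{lem_complete}, which provides a compact set $M'\Supset M$ and a constant $C_0>0$ such that the optimal fundamental estimate
\begin{equation*}
\Bigl(1-\tfrac{C_0}{k}\Bigr)\|s\|^2\leq \tfrac{C_0}{k}\bigl(\|\ddbar^{K_X}_k s\|^2+\|\ddbar^{K_X*}_{k}s\|^2\bigr)+\int_{M'}|s|^2\,dv_X
\end{equation*}
holds for all $s\in\Dom(\ddbar^{K_X}_k)\cap\Dom(\ddbar^{K_X*}_k)\cap L^2_{n,q}(X,L^k)$ and $k$ large. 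Note that identifying $L^2_{n,q}(X,L^k)\cong L^2_{0,q}(X,L^k\otimes K_X)$ is exactly the setup required by Proposition \ref{prop_main_hm} with $E=K_X$.

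Next I would plug this estimate directly into Proposition \ref{prop_main_hm} (with $K$ replaced by $M'$) to obtain
\begin{equation*}
\limsup_{k\to\infty}n!\,k^{-n}N^q(k^{-N_0},\square_k^{K_X})\leq \int_{M'(q)}(-1)^q c_1(L,h^L)^n,
\end{equation*}
where $M'(q)$ denotes the set of points in $M'$ where $R^L$ has exactly $q$ negative and $n-q$ positive eigenvalues (with respect to $\Theta$).

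Finally I would observe that the hypothesis $\Theta=c_1(L,h^L)$ on $X\setminus M$ means $\sqrt{-1}R^L$ is a positive $(1,1)$-form there, so at every point of $X\setminus M$ all $n$ eigenvalues of $R^L$ are positive. Hence for $q\geq 1$ the set $X(q)$ cannot meet $X\setminus M$, and in particular $M'(q)=M(q)$. Substituting this identity into the previous inequality yields the stated bound.

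The whole argument is essentially bookkeeping: the real content lies in Lemma \ref{lem_complete} (the optimal fundamental estimate on complete manifolds, based on Demailly's Bochner--Kodaira--Nakano formula and a cutoff argument) and in Proposition \ref{prop_main_hm} (which upgrades the estimate for harmonic forms to the full low-energy eigenspace via the spectral theorem and the asymptotic upper bound for $B^q_{\leq k^{-N_0}}$ from \cite{HM:14}). Since both are already established in the excerpt, there is no substantive obstacle; the only subtle point is remembering that the completeness of $(X,\Theta)$ is what allows us to identify $\ddbar^{K_X*}_{k,H}$ with the maximal extension of the formal adjoint and to approximate elements of $\cE^q(k^{-N_0},\square^{K_X}_k)$ by compactly supported smooth forms in the graph norm, which is precisely what makes Lemma \ref{lem_complete} applicable on this eigenspace.
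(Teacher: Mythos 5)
Your proposal is correct and matches the paper's intended argument exactly: the paper states this corollary with no written proof beyond ``Analogue to Theorem \ref{thm_complete}'', i.e.\ apply Lemma \ref{lem_complete} to get the optimal fundamental estimate on $M'\Supset M$, feed it into Proposition \ref{prop_main_hm} with $E=K_X$, and use positivity of $c_1(L,h^L)$ on $X\setminus M$ to conclude $M'(q)=M(q)$ for $q\geq 1$. Nothing further is needed.
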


Let $(\widetilde X,\widetilde \omega)$ be a Hermitian manifold of dimension $n$ on which a discrete
group $\Gamma$ acts holomorphically, freely and properly such that $\widetilde{\omega}$ is a $\Gamma$-invariant Hermitian
metric and the quotient $X:=\widetilde X/\Gamma$ is compact. Let $(\widetilde L,h^{\widetilde L})$ be a $\Gamma$-invariant
holomorphic Hermitian line bundles on $X$.  Let $\pi_{\Gamma}:\widetilde X\rightarrow X=\widetilde X/\Gamma$ be the projection. Let $\widetilde{B}_{\leq k^{-N_0}}^{q}$ be the spectral function of the Laplacian $\widetilde{\square}^E_k$. We see
\begin{equation}
N_\Gamma(k^{-N_0},\widetilde{\square}^E_k):=\dim_{\Gamma}\cE^q(k^{-N_0},\widetilde{\square}^E_k)=\int_U \widetilde{B}_{\leq k^{-N_0}}^{q}(\widetilde{x}) dv_{\widetilde{X}}(\widetilde{x}).
\end{equation}	
Analogue to Theorem \ref{thm_cover} and Proposition \ref{prop_main_hm} we have:
\begin{corollary}
	For $0\leq q\leq n$, $N_0\geq 2n+1$, we have
	\begin{equation}
	\limsup_{k\rightarrow \infty}n!k^{-n}N_\Gamma(k^{-N_0},\widetilde{\square}^E_k)\leq \int_{X(q)}(-1)^q c_1(L,h^L)^n.
	\end{equation}
\end{corollary}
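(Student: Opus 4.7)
The plan is to combine the structure of the proof of Theorem \ref{thm_cover} with the spectral function estimates used in Proposition \ref{prop_main_hm}, replacing the Bergman kernel function of harmonic forms by the spectral function $\widetilde{B}^q_{\leq k^{-N_0}}$ of low-energy forms for $\widetilde{\square}^E_k$. The identity
\begin{equation*}
N_\Gamma(k^{-N_0},\widetilde{\square}^E_k)=\int_U \widetilde{B}_{\leq k^{-N_0}}^{q}(\widetilde{x})\, dv_{\widetilde{X}}(\widetilde{x})
\end{equation*}
stated just before the corollary already reduces the $\Gamma$-dimension to an ordinary integral over the fundamental domain $U$, so the problem is to pass to the limit inside this integral and then push the integral down to $X$.

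First, I would invoke the uniform upper bound in the previous theorem on the compact set $\overline{U}\subset \widetilde{X}$: there exist $C>0$ and $k_0$ such that $k^{-n}\widetilde{B}^q_{\leq k^{-N_0}}(\widetilde{x})\leq C$ for all $\widetilde{x}\in \overline{U}$ and $k\geq k_0$. Since $\overline{U}$ has finite volume, Fatou's lemma applied to the non-negative sequence $C-k^{-n}\widetilde{B}^q_{\leq k^{-N_0}}$ on $U$ yields
\begin{equation*}
\limsup_{k\to\infty}\int_U k^{-n}\widetilde{B}^q_{\leq k^{-N_0}}(\widetilde{x})\,dv_{\widetilde{X}}(\widetilde{x})\leq \int_U \limsup_{k\to\infty} k^{-n}\widetilde{B}^q_{\leq k^{-N_0}}(\widetilde{x})\,dv_{\widetilde{X}}(\widetilde{x}).
\end{equation*}

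Next, I would apply the pointwise local asymptotic from the theorem of Hsiao--Marinescu quoted above to $(\widetilde{X},\widetilde{\omega})$ and $(\widetilde{L},h^{\widetilde{L}})$, which gives
\begin{equation*}
\limsup_{k\to\infty} k^{-n}\widetilde{B}^q_{\leq k^{-N_0}}(\widetilde{x})\leq (-1)^q 1_{\widetilde{X}(q)}\frac{c_1(\widetilde{L},h^{\widetilde{L}})^n}{\widetilde{\omega}^n}(\widetilde{x}),\quad \widetilde{x}\in\widetilde{X}.
\end{equation*}
Combining the three displays and the identity from the start gives
\begin{equation*}
\limsup_{k\to\infty} n!\,k^{-n}N_\Gamma(k^{-N_0},\widetilde{\square}^E_k)\leq \int_{U(q)}(-1)^q c_1(\widetilde{L},h^{\widetilde{L}})^n.
\end{equation*}

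Finally I would transfer the integral from $U$ to $X$ exactly as in the proof of Theorem \ref{thm_cover}: the map $\pi_\Gamma|_U\colon U\to X\setminus Z$ is biholomorphic with $Z\subset X$ of measure zero, and all the data $\widetilde{\omega}$, $h^{\widetilde{L}}$ are $\Gamma$-invariant so they descend to $\omega$ and $h^L$ on $X$; thus $\pi_\Gamma^* c_1(L,h^L)=c_1(\widetilde{L},h^{\widetilde{L}})$, and the integral over $U(q)$ equals the integral over $(X\setminus Z)(q)=X(q)$ up to the measure-zero set. This produces the required bound. The main obstacle, if any, is purely bookkeeping: one must check that the uniform bound from the Hsiao--Marinescu theorem really applies on the compact set $\overline{U}$ in the non-compact manifold $\widetilde{X}$, and that the $\Gamma$-invariant change-of-variables on the top Chern form is legitimate on the full-measure open piece $X\setminus Z$; everything else is a direct recycling of the arguments already in the paper.
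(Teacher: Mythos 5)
Your proposal is correct and follows essentially the same route the paper intends: the paper itself only sketches this corollary as ``analogue to Theorem \ref{thm_cover} and Proposition \ref{prop_main_hm}'', and the intended argument is precisely your combination of the identity $N_\Gamma(k^{-N_0},\widetilde{\square}^E_k)=\int_U \widetilde{B}^q_{\leq k^{-N_0}}\,dv_{\widetilde{X}}$, the Hsiao--Marinescu uniform bound and pointwise asymptotics on the compact set $\overline{U}$, Fatou's lemma, and the push-forward of the integral from $U$ to $X\setminus Z$ via $\pi_\Gamma$. The two ``obstacles'' you flag are both handled exactly as you suggest, since the quoted spectral-function theorem is stated for arbitrary compact subsets of a Hermitian manifold and all data are $\Gamma$-invariant.
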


\bibliographystyle{amsalpha}

\begin{thebibliography}{99999}

\bibitem[Bern02]{BB:02}
B.~Berndtsson.
\newblock An eigenvalue estimate for the {$\overline\partial$}-{L}aplacian.
\newblock {\em J. Differential Geom.}, 60(2):295--313, 2002.

\bibitem[Berm04]{Be04} R. Berman, {\it Bergman kernels and local holomorphic Morse inequalities,} Math. Z., \textbf{248}(2) (2004), 325-344.

\bibitem[Dem85]{Dem:85}
J.-P. Demailly.
\newblock Champs magn\'etiques et in\'egalit\'es de {M}orse pour la
{$d''$}-cohomologie.
\newblock {\em Ann. Inst. Fourier (Grenoble)}, 35(4):189--229, 1985.

	\bibitem[Dem86]{Dem:86}
J.-P. Demailly.
\newblock Sur l'identit\'e de {B}ochner-{K}odaira-{N}akano en g\'eom\'etrie
hermitienne.
\newblock In {\em S\'eminaire d\/'analyse {P}. {L}elong-{P}. {D}olbeault-{H}.
	{S}koda, ann\'ees 1983/1984}, volume 1198 of {\em Lecture Notes in Math.},
pages 88--97. Springer, Berlin, 1986.

\bibitem [Dem12]{Dem:12}
J.-P. Demailly,
\emph{Complex analytic and differential geometry}, 2012, published online
at www-fourier.ujf-grenoble.fr/{$\sim$}demailly/manuscripts/agbook.pdf.

\bibitem[HHLS20]{HHLS20}
C.-Y.~Hsiao, R.-T.~Huang, X.~Li and G. Shao.
\newblock {$S^1$}-equivariant index theorems and {M}orse inequalities on
complex manifolds with boundary.
\newblock{\em J. Funct. Anal.}, 279(3), 2020.

\bibitem[HL16]{HsiaoLi:16}
C.-Y.~Hsiao and X.~Li.
\newblock Morse inequalities for {F}ourier components of {K}ohn-{R}ossi
cohomology of {CR} manifolds with {$S^1$}-action.
\newblock {\em Math. Z.}, 284(1-2):441--468, 2016.

\bibitem[HM12]{HM12}
C.-Y.~Hsiao and G.~Marinescu.
\newblock Szegő kernel asymptotics and Morse inequalities
on CR manifolds
\newblock{\em Math. Z.}, 271, 509–553 (2012).

\bibitem[HM14]{HM:14}
C.-Y.~Hsiao and G.~Marinescu.
\newblock Asymptotics of spectral function of lower energy forms and
{B}ergman kernel of semi-positive and big line bundles.
\newblock{\em Comm. Anal. Geom.}, 22(1):1--108, 2014.

\bibitem[HMW20]{HMW20}
C.-Y.~Hsiao, G.~Marinescu and H.~Wang,
\emph{Szegő kernels asymptotics on some non-compact complete CR manifolds},~arXiv:2012.11457v1. to appear in J. Geom. Anal.

\bibitem[MM07]{MM07}
X.~Ma and G.~Marinescu,
\emph{Holomorphic {Morse} inequalities and {Bergman} kernels}, Progress in Math., vol. 254, Birkh{\"a}user, Basel, 2007, 422 pp.

\bibitem[Mar92]{M:92}
G. Marinescu.
\newblock Morse inequalities for {$q$}-positive line bundles over weakly
{$1$}-complete manifolds.
\newblock {\em C. R. Acad. Sci. Paris S\'er. I Math.}, 315(8):895--899, 1992.

\bibitem[Puc17]{Pul:17}
M. Puchol.
\newblock{$G$-invariant holomorphic Morse inequalities.}
\newblock{\em J. Differential Geom.}, 106:507--558, 2017.

\bibitem[Puc18]{Pul:18}
M. Puchol.
\newblock {Holomorphic {M}orse inequalities for orbifolds.}
\newblock {\em Math. Z.}, 289:1237--1260, 2018.

\bibitem[Tak83]{Takegoshi:83}
K. Takegoshi.
\newblock Global regularity and spectra of {L}aplace-{B}eltrami operators on
pseudoconvex domains.
\newblock {\em Publ. Res. Inst. Math. Sci.}, 19(1):275--304, 1983.

\bibitem[Wan16]{Wh:16}
H. Wang.
\newblock On the growth of von {N}eumann dimension of harmonic spaces of
semipositive line bundles over covering manifolds.
\newblock {\em Internat. J. Math.}, 27(11):1650093, 14, 2016.

\bibitem[Wan21a]{Wh:21a}
H. Wang.
\newblock The growth of dimension of cohomology of semipositive line bundles on Hermitian manifolds.
\newblock{\em Math. Z.}, 297, 339–360 (2021).

\bibitem[Wan21b]{Wh:21b}
H. Wang,
\emph {Cohomology Dimension Growth for Nakano q-Semipositive Line Bundles},
\newblock{J. Geom. Anal.}, {\bf 31}, 4934--4965 (2021).
\end{thebibliography}

\end{document}